	\renewcommand{\theequation}{\arabic{section}.\arabic{equation}}
	\newtheorem{thm}{Theorem}[section]
	\newtheorem{prop}[thm]{Proposition}
    \newcommand{\Dh}{\Delta_h}
    \newcommand{\hf}{\frac{1}{2}}
    \newcommand{\nrm}[1]{\left\| #1 \right\|}
    \newcommand{\cip}[2]{\left( #1 \middle| #2 \right)}
    \newcommand{\eipns}[2]{\left[ #1 \middle\| #2 \right]_{\rm ns}}
    \newcommand{\eipew}[2]{\left[ #1 \middle\| #2 \right]_{\rm ew}}
    \newcommand{\ciptwo}[2]{\left( #1 \middle\| #2 \right)}
    \newcommand{\be}{\begin{eqnarray}}
    \newcommand{\ee}{\end{eqnarray}}
\begin{document}

\title{Energy Stable and Efficient Finite-Difference Nonlinear Multigrid Schemes for the Modified Phase Field Crystal Equation}

	\author{
{\bf Arvind Baskaran}
	\\
Mathematics Department 
	\\ 
The University of California; Irvine, CA 92697
	\\ 
\url{baskaran@math.uci.edu}
        \\
{\bf Peng Zhou}\footnote{Currently at Haerbin Inst. Techn., China}
	\\
Mathematics Department 
	\\ 
University of California; Irvine, CA 92697
	\\
\url{zhoup@math.uci.edu}
	\\
{\bf Zhengzheng Hu}
	\\
Mathematics Department
	\\
North Carolina State University; Raleigh, NC 27695
	\\
\url{zhu4@ncsu.edu}
        \\
{\bf Cheng Wang}
	\\
Mathematics Department
	\\
University of Massachusetts, North Dartmouth, MA 02747
	\\
\url{cwang1@umassd.edu}
	\\
{\bf Steven M. Wise}
	\\
Mathematics Department
	\\
The University of Tennessee, Knoxville, TN 37996
	\\
\url{swise@math.utk.edu}
	\\
{\bf John S. Lowengrub}
	\\
Mathematics Department 
	\\ 
University of California; Irvine, CA 92697 
	\\ 
Corresponding author: \url{lowengrb@math.uci.edu}}

	\maketitle

	\begin{abstract}
In this paper we present two unconditionally energy stable finite difference schemes for the Modified Phase Field Crystal (MPFC) equation, a sixth-order nonlinear damped wave equation, of which the purely parabolic Phase Field Crystal (PFC) model can be viewed as a special case.  The first is a convex splitting scheme based on an appropriate decomposition of the discrete energy and is first order accurate in time and second order accurate in space.  The second is a new, fully second-order scheme that also respects the convex splitting of the energy. Both schemes are nonlinear but may be formulated from the gradients of strictly convex, coercive functionals.  Thus, both are uniquely solvable regardless of the time and space step sizes.  The schemes are solved by efficient nonlinear multigrid methods.  Numerical results are presented demonstrating the accuracy, energy stability, efficiency, and practical utility of the schemes.  In particular, we show that our multigrid solvers enjoy optimal, or nearly optimal complexity in the solution of the nonlinear schemes. 
	\end{abstract}

\noindent{\bf Keywords:} Nanoscale Materials; Phase Field Crystal;  High-order Damped Wave Equation;  Convex Splitting; Finite Difference; Multigrid

	\section{Introduction}
	\label{sec-introduction}
Crystalline materials are used in numerous engineering applications.  In practice, most crystals have nanoscopic imperfections in the form of defects -- such as vacancies, grain boundaries, and dislocations -- and controlling, or at least predicting, the formation and evolution of such imperfections is a major challenge.   Accurate modeling of crystal dynamics, especially defect dynamics, requires atomic-scale resolution.  The Phase Field Crystal (PFC) equation, a continuum model that has attracted significant attention in recent years, has shown promise in this regard~\cite{elder04, elder02, provotas07}. In the PFC approach the crystal is described via a continuous phase field $\phi$ that approximates the number density of atoms. The field variable $\phi$ admits two qualitatively different solutions, a constant solution that represents the liquid phase and a periodic solution that represents the solid phase. The local extrema of the periodic solution lie on a near-perfect lattice, mimicking the atomic lattice of the crystal.  The PFC equation is  	
	\begin{equation}
	\label{pfc_dynamics}
\partial_t \phi =  M\Delta \left(\phi^3 + (1-\epsilon) \phi + 2\Delta \phi +\Delta^2 \phi \right) ,
	\end{equation}
where $\epsilon < 1$ and $M>0$.  This model, which is related to the dynamic density functional theory of freezing~\cite{elder04,marconi99}, has the important advantage over many atomistic models that the characteristic time is determined by the diffusion time scale and not by that of the atomic vibrations.  This enables one to capture relatively long time processes, in contrast to atomistic models.  The recent review by Provatas~\emph{et al.}~\cite{provotas07} outlines a wide range of applications of the PFC modeling framework.

The major disadvantage of the PFC model is that it fails to distinguish between the elastic relaxation and diffusion time scales~\cite{elder04,stefanovic06}.  In~\cite{stefanovic06,stefanovic09}, the authors introduced the Modified Phase Field Crystal (MPFC) model to overcome this difficulty.  The MPFC equation is
	\begin{equation}
	\label{mpfc_dynamics}
\partial_{tt} \phi + \beta \partial_t \phi = M\Delta \left(\phi^3 + (1-\epsilon) \phi + 2\Delta \phi +\Delta^2 \phi \right),
	\end{equation}
where $\beta >0$.  The MPFC equation (\ref{mpfc_dynamics}) is a nonlinear damped wave equation modeling a viscoelastic response to perturbations
to the density field.  In this model, perturbations in the density field are transmitted by waves that travel essentially undamped up to a certain length scale determined by the parameters.  When this length scale is of the order of the size of the system a separation of elastic relaxation and diffusion time scales may be practically observed~\cite{stefanovic06}.

The MPFC equation, like the PFC equation, is a sixth order evolutive  nonlinear partial differential equation that cannot be solved analytically in practical circumstances.  Efficient and accurate methods for approximating the solutions of the MPFC equation are therefore highly desirable.  However, to date, very little research has been carried out in this direction.

Because of the close relationship between the MPFC and PFC models, methods for the latter equation can be adapted and applied to the former.  See, for example, \cite{backofen07,cheng08,elder04,hu09,mellenthin08,wise09}  for some recent approximation methods specifically for the PFC model.  However, one must take care, as we show in the following pages, to adequately account for the wave-like nature of the MPFC solutions in the numerical method, especially in the design of provably energy stable schemes.

Methods specifically designed for the MPFC equation can be found in~\cite{galenko11,stefanovic09,wang10b,wang11}.   Stefanovic~\emph{et al.},~\cite{stefanovic09} employed a semi-implicit finite difference discretization, with a multigrid algorithm for solving the algebraic equations.  They provide no numerical analysis for their scheme, which is significantly different from schemes we propose and analyze.  In particular, theirs is not expected to be unconditionally solvable or energy stable.  The authors did, however, give some evidence of the efficiency of their multigrid solver.  We will conduct a similar study for our multigrid solver in this paper.  The MPFC scheme in~\cite{galenko11} is more or less the same as the first-order convex-splitting that we devised earlier in~\cite{wang10b, wang11}.  The first-order convex splitting scheme in our work~\cite{wang10b, wang11} has two fundamental properties.  It is unconditionally energy stable and unconditionally uniquely solvable.  We rigorously analyzed this convex splitting scheme in~\cite{wang10b,wang11}, but we did not provide a practical solution strategy.  That gap is filled here.

In~\cite{hu09,wise09} we presented first and second-order accurate (in time) finite difference schemes for the purely parabolic PFC model, based on a convex splitting framework applied to the physical energy.  The convexity splitting idea -- in the context of first order (in time) convex splitting schemes -- is generally credited to~\cite{eyre98}.  One of the main advances in~\cite{hu09} -- and the more recent work in~\cite{shen11} -- was the demonstration that the convex splitting framework can be extended to second-order (in time) methods as well, in a natural way.  While the motivation in~\cite{hu09} was the application to the PFC model, the second-order convex splitting idea that we developed is, in fact, rather general. 

The main goal of this paper is to apply the first and second-order convex splitting framework to the MPFC equation.  The principal challenge in doing so is that the extension to damped wave dynamics appears, at first sight, not so straightforward.  This obstacle was first surmounted in \cite{wang10b,wang11}, as already mentioned, where we extended the first-order convexity splitting idea for the MPFC model.  Specifically, we showed the existence of a pseudo energy (different from the physical energy), which is  non-increasing in time for solutions of the MPFC model and to which the usual convexity splitting idea can be applied.  As a side note, in~\cite{wang10b} we used this energy stability approach to prove the existence and uniqueness of a global in time smooth solution for the MPFC partial differential equation.

Herein we present, for the first time, a new fully second-order convex splitting scheme for the MPFC model.  The idea is general and can be applied to other damped wave equations.  We also compare the first-order convex splitting scheme from~\cite{wang10b,wang11} with the new second-order convex splitting scheme.  We propose and test efficient (optimal complexity) nonlinear multigrid solvers for both.  These two convex splitting schemes are shown to be unconditionally energy stable with respect to a pseudo energy, which is different from the physical energy, and mass conserving.  Both schemes are nonlinear.  But both are obtained via gradients of strictly convex, coercive functionals, facts which guarantee the unique solvability of the schemes, regardless of the time and space step sizes.

The paper is organized as follows. The MPFC model is recounted in Sec.~\ref{sec-model}.  In Sec.~\ref{sec-schemes} we present our first and second-order accurate (in time) convex splitting schemes and analyze their basic properties.  We present some numerical results in Sec.~\ref{sec-numerical-results} that give evidence of the convergence of the schemes and the efficiency of the multigrid solvers.  In that section, we also show a couple of practical application of the model.  The first is a problem of crystalization, the second, a problem of elastic relaxation in a strained crystal.  We give some conclusions in Sec.~\ref{sec-conclusions}.  To keep the presentation short, we relegate some of the details of the schemes to two Appendices.  In Appendix~\ref{app-summation-by-parts}, we give the basics of our finite difference discretization of space, and we list some needed summation-by-parts formulae.  In Appendix~\ref{app-nonlinear-multigrid}, we give some details of the nonlinear multigrid solvers that we utilize to advance the nonlinear schemes in time.

	\section{The Modified Phase Field Crystal Model}
	\label{sec-model}

Consider the energy 
	\begin{equation}
E ( \phi ) = \int_{\Omega} \left \{ \frac{1}{4} \phi^4 + \frac{\alpha}{2} \phi^2 - |\nabla \phi |^2 + \frac{1}{2} (\Delta \phi)^2 \right \} d\mathbf{x} ,
	\label{energy}
	\end{equation}
where $\phi: \Omega \subset \mathbb{R}^2 \rightarrow \mathbb{R}$ is the atomic density field; and $\alpha:=1-\epsilon$ with $\epsilon \leq 1$.  We assume $\Omega = (0,L_x) \times (0,L_y)$ and $\phi$ is $\Omega$--periodic.  The MPFC equation is the pseudo-gradient flow
	\begin{equation}
\partial_{tt}\phi + \beta \partial_t \phi = \nabla \cdot \left( M(\phi ) \nabla \mu \right) ,
	\end{equation}
where $ M(\phi ) >0 $ is a mobility, and
	\begin{equation}
\mu := \delta_{\phi} E( \phi ) = \phi^3 +\alpha\phi +2\Delta\phi+\Delta^2\phi .
	\end{equation}
The variable $\mu$ is called the chemical potential.  For simplicity, we take $M$ to be a constant and consider the simpler equation
	\begin{equation}
\partial_{tt}\phi + \beta \partial_t \phi =  M\Delta \mu , \quad \mu = \phi^3 +\alpha\phi +2\Delta\phi+\Delta^2\phi .
	\label{mpfc_1}
\end{equation}
Note that in~\cite{wang10b,wang11}, we used an alternate version of the equation, namely, 
	\begin{displaymath}
\tilde{\beta}\partial_{tt}\phi +  \partial_t \phi =  \Delta \mu , \quad \tilde{\beta} >0 ,
	\end{displaymath}
which expresses the model as a perturbed parabolic equation.  For the present exposition we will abandon this form in favor of Eq.~(\ref{mpfc_1}), since (\ref{mpfc_1}) is more widely used in the physics literature~\cite{stefanovic06,stefanovic09}.

It turns out that along the solution trajectories of Eq.~(\ref{mpfc_1}) the energy (\ref{energy}) may actually increase in time, on some time intervals.  (See Fig.~\ref{fig2} and the corresponding discussion in the Sec.~\ref{subsec-energy-stability}.) However, solutions of the MPFC equation do dissipate a pseudo energy~\cite{wang10b,wang11}.  Also observe that Eq.~(\ref{mpfc_1}) is not precisely a mass conservation equation due to the term $\partial_{tt}\phi$.  However, with the introduction of a reasonable initial condition for $\partial_t\phi$, it is possible to show that $\displaystyle{\int_\Omega \partial_t\phi\, d{\bf x} = 0}$ for all time.

We shall need a precise definition of the $H_{per}^{-1}$ inner-product to define an appropriate pseudo energy for the MPFC equation.  Suppose $f\in \left\{u\in L^2\left(\Omega \right) \ \middle| \ \int_\Omega u\, d{\bf x} = 0\right\} =: L_0^2\left(\Omega \right)$. Define $\psi_f\in H_{per}^2\left(\Omega\right)\cap L_0^2\left(\Omega \right)$ to be the unique solution to the PDE problem
	\begin{eqnarray}
-\Delta\psi_f = f\quad \mbox{in}\  \Omega  .
	\end{eqnarray}
In this case we write $\psi_f = -\Delta^{-1} f$.  Suppose that $f,g\in L_0^2\left(\Omega\right)$, then we define
	\begin{equation}
\left(f,g\right)_{H^{-1}} := \left(\nabla\psi_f,\nabla\psi_g \right)_{L^2} .
	\end{equation}
Note that, via integration by parts, we have the equalities
	\begin{equation}
\left(f,g\right)_{H^{-1}}	=  -\left(\Delta^{-1}f,g \right)_{L^2} = -\left(\Delta^{-1}g,f \right)_{L^2} = \left(g,f\right)_{H^{-1}}  .
	\end{equation}
For every $f\in L_0^2\left(\Omega\right)$, we define 	
	\begin{equation}
\nrm{f}_{H^{-1}}=\sqrt{\left(f,f\right)_{H^{-1}}}  .
	\end{equation}
	
We now recast the MPFC Eq.~(\ref{mpfc_1}) as the following system of equations:
	\begin{equation}
\partial_t\psi = M\Delta\mu - \beta\psi \ , \quad \partial_t\phi =: \psi  .
	\label{first-order-system}
	\end{equation}
And we introduce the pseudo energy
	\begin{equation}
{\mathcal E}(\phi,\psi) :=E(\phi) + \frac{1}{2M} \nrm{\psi}^2_{H^{-1}}  ,
	\label{energy-plus-time}
	\end{equation} 
which requires that $\int_\Omega\psi\, d{\bf x} =0$.  This is the case as long as the initial condition $\int_\Omega\partial_t\phi({\bf x},0)\, d{\bf x} = 0$ is satisfied~\cite{wang10b,wang11}.  In what follows, we will use the initial data
	\begin{equation}
\psi({\bf x},0) = \partial_t\phi({\bf x},0) \equiv 0\quad\mbox{in}\quad \Omega  ,
	\end{equation}
so that $\int_\Omega\partial_t\phi({\bf x},0)\, d{\bf x} = 0$ is trivially satisfied.  In any case, as long as $\psi=\partial_t\phi$ is of mean zero, a formal calculation shows that (see~\cite{wang10b,wang11})
	\begin{equation}
d_t{\mathcal E}	= \left(\mu,\partial_t\phi\right)_{L^2} +\frac{1}{M}\left(\psi,\partial_t\psi\right)_{H^{-1}} = -\frac{\beta}{M}\left( \psi, \psi \right)_{H^{-1}} \le 0  ,
	\label{pde-psuedo-energy-decrease}
	\end{equation}
which guarantees that the pseudo energy is non-increasing in time.

At this point we observe that the energy $E(\phi) $ admits a splitting into purely convex and concave energies, given by $E = E_c - E_e$, where $E_c$ and $E_e$ are both convex functionals.  In particular, we have the convex splitting of the form 
	\begin{equation}
E_c = \frac{1}{4} \nrm{\phi}^4_{L^4} + \frac{\alpha}{2} \nrm{\phi}^2_{L^2} + \frac{1}{2}\nrm{\Delta \phi}^2_{L^2}, \quad E_e = \nrm{\nabla\phi}^2_{L^2}.
	\end{equation}
This observation is the key toward the construction of a (first-order) convex splitting scheme, as was demonstrated in \cite{wang10b,wang11}.  It will also be the basis of the new fully second-order scheme that we present in the following sections.

	\section{Finite Difference Schemes and Their Properties}
	\label{sec-schemes}

In this section we present two finite difference methods for the MPFC equations. The first is a first-order accurate convex splitting scheme that was introduced and analyzed in \cite{wang10b,wang11}.  The second is a new, fully second-order accurate convex splitting scheme for the MPFC equation that is similar to a scheme we presented earlier for the PFC equation~\cite{hu09}.  In the following we will show that both schemes are discretely mass conserving and unconditionally energy stable. In addition, both schemes are unconditionally uniquely solvable.  We refer the reader to App.~\ref{app-summation-by-parts} and also~\cite{wise10,wise09} for a description of the finite difference framework that  we employ to define and analyze our schemes.

We begin by defining discrete versions of the energy (\ref{energy}) and pseudo energy (\ref{energy-plus-time}).  For the finite difference grid function $\phi \in \mathcal{C}_{\overline{m}\times\overline{n}}$ define
	\begin{equation}
	\label{discrete_energy_PFC}
F(\phi) := \frac{1}{4} \nrm{\phi}_4^4 + \frac{\alpha}{2} \nrm{\phi}^2_2 - \nrm{\nabla_h \phi}^2_2 
+\frac{1}{2} \nrm{\Delta_h \phi}^2_2.
	\end{equation}
For $\phi \in \mathcal{C}_{\overline{m}\times\overline{n}}$ and $\psi\in H$, define the discrete pseudo energy as
	\begin{equation}
\mathcal{F}(\phi,\psi) := F(\phi) + \frac{1}{2M} \nrm{\psi}_{-1}^2,
	\label{discrete_energy_MPFC}
	\end{equation}
where $H$ is the space of mean zero cell centered functions and the ``$-1$" norm is a finite-dimensional equivalent to the continuous $H^{-1}$ norm.  See App.~\ref{app-summation-by-parts}, in particular (\ref{mean-zero-space}) -- (\ref{H-norm}), and also~\cite{wang11}.  Note that the discrete energy $F$  admits a convex splitting.  In particular, if the grid function $\phi \in \mathcal{C}_{\overline{m}\times\overline{n}}$  is periodic, then the energies
	\begin{eqnarray}
F_c(\phi) := \frac{1}{4} \nrm{\phi}^4_4 + \frac{\alpha}{2} \nrm{\phi}^2_2+ \frac{1}{2}\nrm{\Delta_h \phi}^2_2, \quad F_e(\phi) := \nrm{\nabla_h \phi }^2_2
	\end{eqnarray}
are convex, and $F = F_c-F_e$.  Furthermore, the discrete variations (gradients) are $\delta_{\phi} F_c(\phi) = \phi^3 + \alpha \phi + \Delta_h^2 \phi$ and $\delta_{\phi} F_e(\phi) = -2 \Delta_h \phi$.  

	\subsection{First-Order Convex Splitting Scheme}
	\label{subsec-1st-order-scheme}

The first-order convex-splitting scheme, introduced in~\cite{wang10b,wang11}, is constructed by respecting the convexity splitting $F = F_c-F_e$ and is defined as follows: find $\phi^{k+1},\psi^{k+1}, \mu^{k+1}, \kappa^{k+1} \in  \mathcal{C}_{\overline{m} \times \overline{n}}$ periodic such that
	\begin{eqnarray}
\psi^{k+1} - \psi^k &=& s M \Delta_h \mu^{k+1} - s\beta \psi^{k+1} , 
	\label{scheme1_1}
	\\
\phi^{k+1} - \phi^k &=& s\psi^{k+1},
	\label{scheme1_2}
	\\
\mu^{k+1} &=&  \delta_{\phi} F_c(\phi^{k+1}) - \delta_{\phi} F_e(\phi^{k})
	\nonumber
	\\
&=& \left(\phi^{k+1}\right)^3 + \alpha \phi^{k+1} + 2 \Delta_h \phi^{k} + \Delta_h \kappa^{k+1},
	\label{scheme1_3}
	\\
\kappa^{k+1} &:=& \Delta_h \phi^{k+1} ,
	\label{scheme1_4}
	\end{eqnarray}
where $s>0$ is the time step size, which we will assume is unchanging, and $\psi^0 \equiv 0$.  Notice that the part coming from the convex energy, $F_c$, is treated implicitly, and the part coming from $F_e$ is treated explicitly.  In this case, such a treatment leads to a nonlinear scheme, though the nonlinearity is quite mild.  This scheme can be simplified by using (\ref{scheme1_2}) to eliminate the unknown $\psi^{k+1}$ from (\ref{scheme1_1}).  Therefore, an equivalent form is
	\begin{eqnarray}
\phi^{k+1} -\phi^k &=& \frac{sM}{\beta+\frac{1}{s}} \Delta_h \mu^{k+1} + \frac{1}{\beta+\frac{1}{s}}\psi^k ,
	\label{discrete_1_a}
	\\
\psi^{k+1} &=& \frac{\phi^{k+1} - \phi^k}{s} .
	\label{discrete_1_b}
	\end{eqnarray}
Note that Eqs.~(\ref{discrete_1_a}) and (\ref{discrete_1_b}) decouple, and $\psi^{k+1}$ can be obtained after $\phi^{k+1}$ is known.  The three equations  (\ref{scheme1_3}),  (\ref{scheme1_4}), and (\ref{discrete_1_a}) are solved  simultaneously using the non-linear multigrid method that is described in App.~\ref{subapp-multigrid-first}. Then $\psi^{k+1}$ is updated via (\ref{discrete_1_b}).

	\subsection{Second-Order Convex Splitting Scheme}
	\label{subsec-2nd-order-scheme}

Following our work in~\cite{hu09,shen11} we propose the following second-order convex splitting scheme: given $\phi^k,\, \psi^k,\, \phi^{k-1} \in \mathcal{C}_{\overline{m} \times \overline{n}}$ periodic, find $ \phi^{k+1},\, \psi^{k+1}, \, \mu^{k+\frac{1}{2}}, \, \kappa^{k+\frac{1}{2}} \in  \mathcal{C}_{\overline{m} \times \overline{n}}$ periodic, such that
	\begin{eqnarray}
\psi^{k+1} -\psi^k &=& s M \Delta_h \mu^{k+\frac{1}{2}} - s\beta \psi^{k+\frac{1}{2}} ,
	\label{mpfc_scheme2_1}
	\\ 
\phi^{k+1} -\phi^k &=& s\psi^{k+\frac{1}{2}} ,
	\label{mpfc_scheme2_2}
	\\
\mu^{k+\frac{1}{2}} &=&   \chi\left(\phi^{k+1},\phi^k\right)\phi^{k+\hf} + \alpha \phi^{k+\frac{1}{2}} + 2\Delta_h \tilde{\phi}^{k+\hf} + \Delta_h \kappa^{k+\frac{1}{2}},
	\label{mpfc_scheme2_3}
	\\
\kappa^{k+\frac{1}{2}} &:=& \Delta_h \phi^{k+\frac{1}{2}} ,
	\label{mpfc_scheme2_4}
	\end{eqnarray}
where $s>0$ is the time step -- which, again, we take to be unchanging for simplicity -- and 
	\begin{displaymath}
\chi(\phi,\psi) :=  \frac{\phi^2+\psi^2}{2}, \quad \psi^{k+\frac{1}{2}} := \frac{\psi^{k+1} + \psi^k }{2} ,\quad \phi^{k+\frac{1}{2}} := \frac{\phi^{k+1} + \phi^k }{2} , \quad \tilde{\phi}^{k+\frac{1}{2}} := \frac{3 \phi^k - \phi^{k-1}}{2} .
	\end{displaymath}
We enforce the initial conditions $\psi^0 \equiv 0$ and $\phi^{-1}\equiv\phi^0$.  Note that the approximation $\phi^{-1}\equiv\phi^0$ will result in a first order (in time) local truncation error at the first step of the scheme.  This does not, however spoil the global second-order convergence of the method, as our tests later show.

Here again, we respect the convexity splitting noted earlier.  The convex contribution to the chemical potential is treated implicitly, but now using a second-order secant type approach as in~\cite{hu09,shen11}.  The concave contribution is treated explicitly using a second-order extrapolation.  This leads to a nonlinear, two-step method.   As before, we can decouple some of the equations in the scheme.  In particular, eliminating $\psi^{k+1}$ from (\ref{mpfc_scheme2_1}) we obtain
	\begin{eqnarray}
\phi^{k+1} -\phi^k &=& \frac{sM}{\beta+ \frac{2}{s}} \Delta_h \mu^{k+\frac{1}{2}} + \frac{2}{\beta+ \frac{2}{s}}\psi^k ,
	\label{discrete_2_a}
	\\
\psi^{k+1} &=& \frac{2}{s}\left( \phi^{k+1} -\phi^k\right) - \psi^k ,
	\label{discrete_2_b} 
	\end{eqnarray}
so that $\psi^{k+1}$ may be calculated after $\phi^{k+1}$ is known.  The three equations  (\ref{mpfc_scheme2_3}), (\ref{mpfc_scheme2_4}), and (\ref{discrete_2_a})  are solved  simultaneously using the non-linear multigrid method that is described in App.~\ref{subapp-multigrid-second}.  Then $\psi^{k+1}$ is updated via (\ref{discrete_2_b}).

	\subsection{Mass Conservation and Unconditional Unique Solvability}

The above schemes are discretely mass conserving and uniquely solvable for any time step $s>0$. These facts were proven for the first-order convex splitting scheme (\ref{scheme1_3}) -- (\ref{discrete_1_b}) in \cite{wang11}. We now provide the proofs for the second-order convex splitting scheme (\ref{mpfc_scheme2_3}) -- (\ref{discrete_2_b}).
	\begin{thm}
(Mass Conservation): The first-order convex splitting scheme  (\ref{scheme1_3}) -- (\ref{discrete_1_b}) and the second-order convex-splitting scheme (\ref{mpfc_scheme2_3}) -- (\ref{discrete_2_b}) for the MPFC equations are mass conserving for any time step $s>0$ provided solutions exists to the respective schemes.
	\end{thm}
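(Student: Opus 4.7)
The plan is to establish mass conservation for the second-order scheme, since the paper notes that the first-order case is already handled in \cite{wang11}. Mass conservation amounts to showing $\sum_{i,j}\phi^{k+1}_{i,j} = \sum_{i,j}\phi^k_{i,j}$ for every $k\ge 0$, where the sum is over all cells of the discrete grid. By Eq.~(\ref{mpfc_scheme2_2}) this is equivalent to $\sum_{i,j}\psi^{k+1/2}_{i,j} = 0$, so my target reduces to showing $\sum_{i,j}\psi^k_{i,j} = 0$ for all $k \ge 0$.

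First I would sum Eq.~(\ref{mpfc_scheme2_1}) over the whole periodic grid. The critical ingredient is the standard periodic discrete summation-by-parts identity from Appendix~\ref{app-summation-by-parts}, which gives $\sum_{i,j}(\Delta_h v)_{i,j} = 0$ for any periodic grid function $v$. Applied to $v = \mu^{k+\frac{1}{2}}$, this annihilates the $\Delta_h\mu^{k+1/2}$ term in (\ref{mpfc_scheme2_1}) and, after using $\psi^{k+1/2} = \frac{1}{2}(\psi^{k+1}+\psi^k)$, leaves the recursion
\[
\Bigl(1+\tfrac{s\beta}{2}\Bigr)\sum_{i,j}\psi^{k+1}_{i,j} \;=\; \Bigl(1-\tfrac{s\beta}{2}\Bigr)\sum_{i,j}\psi^{k}_{i,j}.
\]

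Starting from the prescribed initial condition $\psi^0\equiv 0$, a trivial induction on $k$ now yields $\sum_{i,j}\psi^k_{i,j} = 0$ for every $k\ge 0$, hence $\sum_{i,j}\psi^{k+1/2}_{i,j}=0$, and therefore $\sum_{i,j}\phi^{k+1}_{i,j} = \sum_{i,j}\phi^k_{i,j}$ by (\ref{mpfc_scheme2_2}), which is precisely mass conservation. As a welcome byproduct, the mean-zero property of $\psi^k$ confirms $\psi^k \in H$, so the $\nrm{\psi^k}_{-1}$ term appearing in the discrete pseudo energy $\mathcal{F}$ is well-defined.

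There is no substantive obstacle in this argument: the entire proof is algebraic once the periodic summation-by-parts identity $\sum\Delta_h v = 0$ is invoked, and the damping parameter $\beta$ enters only through the harmless factor $(1\pm s\beta/2)$, which does not affect the induction because the right-hand side vanishes at every step. The only minor care is to ensure that the second-order averages and extrapolations in $\mu^{k+1/2}$ and $\tilde\phi^{k+1/2}$ are built entirely from periodic grid functions, so that the Laplacian identity applies without qualification; this is immediate from the scheme's setup.
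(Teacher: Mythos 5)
Your proposal is correct and follows essentially the same route as the paper: sum Eq.~(\ref{mpfc_scheme2_1}) against the constant function, use the periodic summation-by-parts identity (Prop.~\ref{green1stthm-2d}) to annihilate the $\Delta_h\mu^{k+\frac{1}{2}}$ term, obtain the recursion $\bigl(1+\tfrac{s\beta}{2}\bigr)\ciptwo{\psi^{k+1}}{{\bf 1}}=\bigl(1-\tfrac{s\beta}{2}\bigr)\ciptwo{\psi^{k}}{{\bf 1}}$, and conclude by induction from $\psi^0\equiv 0$ together with Eq.~(\ref{mpfc_scheme2_2}). The paper likewise defers the first-order case to the cited reference, so there is nothing to add.
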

	\begin{proof}
The details for the first-order scheme are contained in~\cite[Thm.~4.2]{wang11}. Suppose that $\left(\phi^{k+1},\psi^{k+1},\mu^{k+\hf},\kappa^{k+\hf}\right)$ is a solution to the second order scheme (\ref{mpfc_scheme2_3}) -- (\ref{discrete_2_b}).  Summing Eq.~(\ref{mpfc_scheme2_1}) and using summation-by-parts, specifically Prop.~\ref{green1stthm-2d},  we have 
	\begin{eqnarray}
\ciptwo{\psi^{k+1} - \psi^k}{{\bf 1}}  &=& s M \ciptwo{\Delta_h \mu^{k+\hf}}{{\bf 1}} - s\beta \ciptwo{ \frac{\psi^{k+1} + \psi^{k}}{2} }{{\bf 1}}
	\nonumber
	\\
&=& -s M \eipew{D_x \mu^{k+\hf}}{ D_x {\bf 1}} - s M \eipns{D_y \mu^{k+\hf}}{D_y {\bf 1}} -  s\beta\ciptwo{\frac{\psi^{k+1} + \psi^{k}}{2}}{{\bf 1}}
	\nonumber
	\\
&=&  - s \beta \ciptwo{ \frac{\psi^{k+1} + \psi^{k}}{2}}{{\bf 1}} .
	\end{eqnarray}
This gives the relation
	\begin{equation}
\ciptwo{\psi^{k+1}}{{\bf 1}}  =   \frac{1- \frac{\beta s}{2}}{1+ \frac{\beta s}{2}} \ciptwo{ \psi^{k}}{ {\bf 1}} .
	\end{equation}
This fact, together with the initial condition $\psi^0 \equiv 0$, ensures that $\ciptwo{\psi^{k}}{{\bf 1}} =0$  for all $k \geq 0$.  Now, observe that from (\ref{mpfc_scheme2_2})
	\begin{equation}
\ciptwo{\phi^{k+1} - \phi^k }{ {\bf 1} } = 0 \quad \mbox{if and only if} \quad  \ciptwo{\frac{\psi^{k+1} + \psi^{k}}{2}}{{\bf 1}} = 0 ,
	\label{mass_cons2}
	\end{equation}
and the result follows: $\ciptwo{\phi^{k+1}}{{\bf 1}} = \ciptwo{\phi^k}{{\bf 1}}$.
	\end{proof}

At this point it is worth emphasizing that for the simplified initial condition $\psi^0 \equiv 0$ we have $\ciptwo{\psi^k}{{\bf 1}} = 0$ for all $k>0$.  In other words, $\psi^k$ is a mean zero, periodic function for all $k>0$.  It would be enough for our purposes to enforce a more general initial condition, namely, $\ciptwo{\psi^0}{{\bf 1}} = 0$, though we do not pursue this level of generality here.

	\begin{thm}
(Unconditional Unique Solvability): The first-order convex splitting scheme  (\ref{scheme1_3}) -- (\ref{discrete_1_b}) and the second-order convex splitting scheme (\ref{mpfc_scheme2_3}) -- (\ref{discrete_2_b}) for the MPFC equations are uniquely solvable for any time step size $s>0$.
	\end{thm}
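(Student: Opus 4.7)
The first-order case is already established in \cite[Thm.~4.3]{wang11}, so I concentrate on the second-order scheme (\ref{mpfc_scheme2_3})--(\ref{discrete_2_b}). My strategy is variational: after reduction to a single nonlinear equation in $\Phi := \phi^{k+1}$, I would exhibit that equation as the Euler-Lagrange condition of a strictly convex, coercive functional on the mass-preserving affine subspace, and then invoke existence and uniqueness of its minimizer.

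By the mass-conservation theorem just proved, I may restrict $\Phi$ to $\mathcal A_k := \{\Phi \in \mathcal{C}_{\overline{m}\times\overline{n}} : \ciptwo{\Phi}{\mathbf 1} = \ciptwo{\phi^k}{\mathbf 1}\}$, whose tangent space is the mean-zero space $H$. Since $\psi^k \in H$, the combination $\Phi - \phi^k - B\psi^k$ with $B := 2/(\beta + 2/s)$ also lies in $H$ for every $\Phi \in \mathcal A_k$, so $\nrm{\cdot}_{-1}$ is defined on it. Using (\ref{mpfc_scheme2_4}) to eliminate $\kappa^{k+\hf}$, inserting (\ref{mpfc_scheme2_3}) into (\ref{discrete_2_a}) with $A := sM/(\beta+2/s)$, and applying $(-\Dh)^{-1}$, the scheme collapses to
\begin{equation*}
\frac{1}{A}(-\Dh)^{-1}\bigl(\Phi - \phi^k - B\psi^k\bigr) + \mu^{k+\hf}(\Phi) = c\,\mathbf 1, \qquad c \in \mathbb{R},
\end{equation*}
with $\mu^{k+\hf}(\Phi)$ the expression in (\ref{mpfc_scheme2_3}); the other unknowns $\kappa^{k+\hf},\mu^{k+\hf},\psi^{k+1}$ are post-processed from $\Phi$ via (\ref{mpfc_scheme2_4}), (\ref{mpfc_scheme2_3}), and (\ref{discrete_2_b}).

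The candidate functional is
\begin{equation*}
J(\Phi) := \frac{1}{2A}\nrm{\Phi - \phi^k - B\psi^k}_{-1}^2 + \mathcal{G}(\Phi),
\end{equation*}
where $\mathcal G$ is a discrete antiderivative of $\mu^{k+\hf}(\cdot)$. Its essential ingredient is the pointwise quartic
\begin{equation*}
g(\Phi) := \tfrac{1}{16}\Phi^4 + \tfrac{1}{12}\phi^k\Phi^3 + \tfrac{1}{8}(\phi^k)^2\Phi^2 + \tfrac{1}{4}(\phi^k)^3\Phi,
\end{equation*}
designed so that $g'(\Phi) = \chi(\Phi,\phi^k)\cdot\tfrac{\Phi + \phi^k}{2}$; the remaining ingredients are the quadratic forms $\tfrac{\alpha}{4}\nrm{\Phi}_2^2$ and $\tfrac{1}{4}\nrm{\Dh\Phi}_2^2$ together with linear terms in $\Phi$ involving the known data $\phi^k$ and $\tilde\phi^{k+\hf}$. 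A direct computation confirms that the constrained variation $\delta_\Phi J$ on $\mathcal A_k$ reproduces the reduced equation.

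The crux is to show $J$ is strictly convex and coercive on $\mathcal A_k$. The only delicate piece is $g$: its second derivative $g''(\Phi) = \tfrac{3}{4}\Phi^2 + \tfrac{1}{2}\phi^k\Phi + \tfrac{1}{4}(\phi^k)^2$ has discriminant $-\tfrac{1}{2}(\phi^k)^2 \le 0$ and positive leading coefficient, so $g'' \ge 0$ pointwise and $g$ is convex; the $\alpha$-quadratic (with $\alpha \ge 0$) and the biharmonic quadratic are manifestly convex, and the remaining terms are linear in $\Phi$. Strict convexity on $\mathcal A_k$ then follows because the $\nrm{\cdot}_{-1}^2$ term is positive definite on the tangent space $H$; coercivity follows from the leading quartic together with finite-dimensional norm equivalence. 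Hence $J$ attains a unique minimizer on $\mathcal A_k$ whose Euler-Lagrange equation is exactly the reduced scheme, giving unique solvability of the full system. I expect the main obstacle to be precisely this discriminant-based convexity check for the secant nonlinearity, together with the mean-zero bookkeeping required to keep $\nrm{\cdot}_{-1}$ both well-defined and positive definite.
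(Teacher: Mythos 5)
Your proposal is correct and follows essentially the same route as the paper: both reduce the scheme to a single equation in $\phi^{k+1}$ and realize it as the Euler--Lagrange equation of a strictly convex, coercive functional on the mass-conserving hyperplane, using the identical quartic antiderivative of the secant nonlinearity (your $g$ is exactly the integrand of the paper's $Q$, and your $J$ is a positive multiple of the paper's $G$ up to an additive constant after completing the square in the $\nrm{\,\cdot\,}_{-1}$ term). The only difference is cosmetic: you carry out the discriminant check for pointwise convexity explicitly, whereas the paper delegates it to a citation of the earlier PFC work.
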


	\begin{proof}
The unconditional unique solvability for the first-order convex splitting scheme was proved in~\cite[Thm.~4.3]{wang11}, and we skip the details for that case here.  The proof for the second-order convex splitting scheme  (\ref{mpfc_scheme2_3}) -- (\ref{discrete_2_b}) closely follows our techniques from~\cite{wang11} for the first-order MPFC scheme and~\cite{hu09} for the second-order PFC scheme.  Specifically, we first construct the functional 
	\begin{equation}
G(\phi) := \frac{1}{2}\nrm{\phi-\phi^k}_{-1}^2 - \frac{2}{\beta + \frac{2}{s}} h^2 \ciptwo{\phi-\overline{\phi}}{\psi^k}_{-1} + \frac{sM}{\beta + \frac{2}{s}} R (\phi) ,
	\end{equation} 
where $\overline{\phi}:= \frac{1}{m n}\ciptwo{\phi^k}{{\bf 1}}$, 
	\begin{equation}
R(\phi) := Q(\phi )  + h^2\ciptwo{ \phi}{\frac{\alpha}{2} \phi^k + 2\Delta_h\tilde{\phi}^{k+\hf} + \frac{1}{2}\Delta_h^2 \phi^k} ,
	\end{equation}
and
	\begin{equation}
Q(\phi) := \frac{h^2}{4} \ciptwo{ \frac{\phi^4}{4} + \frac{\phi^3}{3}\phi^k + \frac{\phi^2}{2} \left(\phi^k\right)^2 +\phi\left(\phi^k\right)^3}{{\bf 1}} + \frac{\alpha}{4} \nrm{\phi}_2^2 + \frac{1}{4} \nrm{\Delta_h \phi}_2^2 .
	\end{equation}
Following arguments in~\cite{hu09}, we see that $R( \phi )$ is strictly convex and
	\begin{equation}
\delta_{\phi}R\left(\phi^{k+1}\right) = \mu^{k+\frac{1}{2}} .
	\end{equation}
This, together with the properties of the ``$-1$" inner product and norm given in App.~\ref{app-summation-by-parts}, implies that $G(\phi)$ is strictly convex and coercive over the set of admissible functions, the hyperplane
	\begin{displaymath}
\mathcal{A} = \left\{ \phi\in\mathcal{C}_{\overline{m}\times \overline{n}} \  \middle| \ \ciptwo{\phi}{{\bf 1}} = \ciptwo{\phi^k}{{\bf 1}} \ \mbox{and} \ \phi \  \mbox{is periodic}\right\},
	\end{displaymath}	
and its unique minimizer $\phi^{k+1}\in\mathcal{A}$ satisfies the discrete Euler-Lagrange equation
	\begin{equation}
\delta_{\phi}G\left(\phi^{k+1}\right) = -\Delta_h^{-1} \left( \phi^{k+1} - \phi^k - \frac{2}{\beta+ \frac{2}{s}} \psi^k \right) + \frac{sM}{\beta+  \frac{2}{s}} \mu^{k+\frac{1}{2}} + C = 0,
	\end{equation}
where $C$ is a constant.  This is equivalent to 
	\begin{equation}
 \phi^{k+1} - \phi^k  = \frac{sM}{\beta + \frac{2}{s}} \Delta_h \mu^{k+\frac{1}{2}} + \frac{2}{\beta+\frac{2}{s}} \psi^k , 
	\end{equation}
which is just Eq.~(\ref{discrete_2_a}).  Thus minimizing the strictly convex functional $G(\phi)$ over the set of admissible functions $\mathcal{A}$ is the same as solving the second-order convex splitting  scheme (\ref{mpfc_scheme2_3}) -- (\ref{discrete_2_b}). This completes the proof.
	\end{proof}
	
	\subsection{Unconditional Energy Stability}
The following theorem was proven in \cite{wang11}.
	\begin{thm}
(Unconditional Energy Stability of the First-Order Scheme):  The first-order convex splitting scheme (\ref{scheme1_3}) -- (\ref{discrete_1_b}) is unconditionally (strongly) pseudo energy stable with respect to (\ref{discrete_energy_MPFC}), meaning that for any time step $s>0$,
	\begin{equation}
\mathcal{F}\left(\phi^{k+1},\psi^{k+1}\right) \leq \mathcal{F}\left(\phi^k,\psi^k\right) .
	\end{equation}
	\end{thm}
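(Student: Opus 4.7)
The plan is to track the pseudo energy increment $\mathcal{F}(\phi^{k+1},\psi^{k+1}) - \mathcal{F}(\phi^k,\psi^k)$ by splitting it into the change in $F(\phi)$ and the change in the kinetic contribution $\frac{1}{2M}\nrm{\psi}_{-1}^2$, then look for a cancellation mediated by the chemical potential $\mu^{k+1}$ through the scheme (\ref{scheme1_1})--(\ref{scheme1_4}). This is the standard convex-splitting dissipation estimate, adapted to the damped-wave first-order system by treating the $\psi$-variable through the discrete $H^{-1}$ norm.

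For the $F$-piece I would invoke the convexity of both $F_c$ and $F_e$. Convexity of $F_c$ yields $F_c(\phi^{k+1}) - F_c(\phi^k) \leq (\delta_\phi F_c(\phi^{k+1}),\phi^{k+1}-\phi^k)_2$, while convexity of $F_e$ used as concavity of $-F_e$ gives $-F_e(\phi^{k+1}) + F_e(\phi^k) \leq -(\delta_\phi F_e(\phi^k),\phi^{k+1}-\phi^k)_2$. Adding these, and recognizing from (\ref{scheme1_3}) that $\mu^{k+1} = \delta_\phi F_c(\phi^{k+1}) - \delta_\phi F_e(\phi^k)$, I arrive at the discrete gradient inequality
\[
F(\phi^{k+1}) - F(\phi^k) \leq (\mu^{k+1},\phi^{k+1}-\phi^k)_2.
\]

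For the kinetic piece I would apply the polarization identity in the discrete $H^{-1}$ inner product,
\[
\nrm{\psi^{k+1}}_{-1}^2 - \nrm{\psi^k}_{-1}^2 = 2\,(\psi^{k+1},\psi^{k+1}-\psi^k)_{-1} - \nrm{\psi^{k+1}-\psi^k}_{-1}^2,
\]
and substitute $\psi^{k+1}-\psi^k = sM\Delta_h\mu^{k+1} - s\beta\psi^{k+1}$ from (\ref{scheme1_1}). The critical identity, which is the discrete summation-by-parts statement for mean-zero grid functions, is $(\psi^{k+1},\Delta_h\mu^{k+1})_{-1} = -(\psi^{k+1},\mu^{k+1})_2$. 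Combining with $\phi^{k+1}-\phi^k = s\psi^{k+1}$ from (\ref{scheme1_2}), the cross term $\frac{1}{M}(\psi^{k+1},\psi^{k+1}-\psi^k)_{-1}$ becomes precisely $-(\mu^{k+1},\phi^{k+1}-\phi^k)_2 - \frac{s\beta}{M}\nrm{\psi^{k+1}}_{-1}^2$. Adding this to the $F$-piece bound, the $(\mu^{k+1},\phi^{k+1}-\phi^k)_2$ terms cancel exactly, leaving
\[
\mathcal{F}(\phi^{k+1},\psi^{k+1}) - \mathcal{F}(\phi^k,\psi^k) \leq -\frac{s\beta}{M}\nrm{\psi^{k+1}}_{-1}^2 - \frac{1}{2M}\nrm{\psi^{k+1}-\psi^k}_{-1}^2 \leq 0,
\]
which is unconditional in $s$.

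The main obstacle is bookkeeping the duality identity $(\psi^{k+1},\Delta_h\mu^{k+1})_{-1} = -(\psi^{k+1},\mu^{k+1})_2$ correctly, since the discrete inverse Laplacian $\Delta_h^{-1}$ is defined only on mean-zero periodic grid functions. This requires the mass-conservation result already established, which guarantees $\psi^k,\psi^{k+1}\in H$, together with the fact that $\Delta_h\mu^{k+1}$ is automatically mean zero; both ingredients are available from App.~\ref{app-summation-by-parts}. Once these identities are in place, the remainder of the argument is the straightforward cancellation described above, and produces not only non-increase but the strictly dissipative bound displayed in the last line.
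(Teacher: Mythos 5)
Your argument is correct: the convexity inequalities for $F_c$ and $-F_e$ give $F(\phi^{k+1})-F(\phi^k)\le h^2\ciptwo{\mu^{k+1}}{\phi^{k+1}-\phi^k}$, the polarization identity plus the duality $\ciptwo{\psi^{k+1}}{\Delta_h\mu^{k+1}}_{-1}=-\ciptwo{\psi^{k+1}}{\mu^{k+1}}$ (valid because mass conservation with $\psi^0\equiv 0$ keeps every $\psi^k$ mean-zero) produces the exact cancellation, and the leftover terms are nonpositive for any $s>0$. Note that the paper does not actually prove this theorem itself but defers to the reference; your proof is the standard first-order convex-splitting dissipation estimate and mirrors the structure of the paper's own proof for the second-order scheme, with the convexity inequalities playing the role that the exact secant identities play there (which is also why the second-order version yields an equality with extra dissipation terms, while yours correctly yields an inequality).
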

We now proceed to show a similar result for the new second-order convex splitting scheme.

	\begin{thm}
(Unconditional Energy Stability of the Second-Order Convex Splitting Scheme): The second-order convex splitting scheme  (\ref{mpfc_scheme2_3}) -- (\ref{discrete_2_b}) is unconditionally (strongly) energy stable with 
respect to the discrete energy 
	\begin{equation}
\widetilde{\mathcal{F}} \left(\phi^{k},\phi^{k-1},\psi^{k}\right) :=  \mathcal{F}\left(\phi^{k},\psi^{k}\right) + \frac{1}{2} \nrm{\nabla_h \left( \phi^{k} - \phi^{k-1}\right)}^2_2 .
	\label{energy_numer}
	\end{equation}
In other words, for any $s>0$ and any $h>0$,
	\begin{equation}
\widetilde{\mathcal{F}} \left(\phi^{k+1},\phi^{k},\psi^{k+1}\right) \leq\widetilde{\mathcal{F}} \left(\phi^{k},\phi^{k-1},\psi^{k}\right) .
	\end{equation}
More specifically, 
	\begin{equation}
\widetilde{\mathcal{F}} \left(\phi^{k+1},\phi^{k},\psi^{k+1}\right) +  s \frac{\beta}{M}  \nrm{\psi^{k+\frac{1}{2}}}^2_{-1} + \frac{s^4}{2}\nrm{\nabla_h \left(D^2_s \phi^k \right)}^2_2 =  \widetilde{\mathcal{F}} \left(\phi^{k},\phi^{k-1},\psi^{k}\right) , 
	\end{equation}
where
	\begin{equation}
\label{defn_delta_s}
D^2_s \phi^k := \frac{1}{s^2} \left( \phi^{k+1} -2 \phi^{k} +\phi^{k-1} \right) .
	\end{equation}
	\end{thm}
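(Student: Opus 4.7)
The plan is to derive a discrete energy identity by testing the evolution equation (\ref{mpfc_scheme2_1}) in the ``$-1$" inner product with $\psi^{k+\frac{1}{2}}/M$. First I would apply $\left(\cdot,\psi^{k+\frac{1}{2}}\right)_{-1}/M$ to (\ref{mpfc_scheme2_1}): the left-hand side telescopes into $\frac{1}{2M}\left(\nrm{\psi^{k+1}}_{-1}^2 - \nrm{\psi^k}_{-1}^2\right)$ by the polarization identity $(a-b,\tfrac{a+b}{2}) = \tfrac{1}{2}(\nrm{a}^2 - \nrm{b}^2)$, while the damping term is already in the desired form $-s\tfrac{\beta}{M}\nrm{\psi^{k+\frac{1}{2}}}_{-1}^2$. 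For the flux term, the key identity $\left(\Delta_h \mu^{k+\frac{1}{2}}, \psi^{k+\frac{1}{2}}\right)_{-1} = -\left(\mu^{k+\frac{1}{2}}, \psi^{k+\frac{1}{2}}\right)_{L^2}$ (which is valid since $\psi^{k+\frac{1}{2}}$ is mean zero, as ensured by the preceding mass conservation theorem) converts this term to an $L^2$ inner product. Substituting $s\psi^{k+\frac{1}{2}} = \phi^{k+1} - \phi^k$ from (\ref{mpfc_scheme2_2}) produces $\left(\mu^{k+\frac{1}{2}}, \phi^{k+1} - \phi^k\right)_{L^2}$, which remains to be identified with $F(\phi^{k+1}) - F(\phi^k)$ plus the stated correction terms.

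Next I would decompose this inner product according to the four constituents of $\mu^{k+\frac{1}{2}}$ given in (\ref{mpfc_scheme2_3}). The choice $\chi(\phi^{k+1},\phi^k) = \tfrac{1}{2}\left((\phi^{k+1})^2+(\phi^k)^2\right)$ is a secant approximation: pointwise, $\chi(\phi^{k+1},\phi^k)\phi^{k+\frac{1}{2}}(\phi^{k+1}-\phi^k)$ factors algebraically as $\tfrac{1}{4}\left((\phi^{k+1})^4 - (\phi^k)^4\right)$, so this term telescopes into $\tfrac{1}{4}\left(\nrm{\phi^{k+1}}_4^4 - \nrm{\phi^k}_4^4\right)$. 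The $\alpha$ and biharmonic terms telescope cleanly (the latter after summation by parts using (\ref{mpfc_scheme2_4})) into $\tfrac{\alpha}{2}\left(\nrm{\phi^{k+1}}_2^2 - \nrm{\phi^k}_2^2\right)$ and $\tfrac{1}{2}\left(\nrm{\Delta_h\phi^{k+1}}_2^2 - \nrm{\Delta_h\phi^k}_2^2\right)$, respectively.

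The delicate step, and the main obstacle, is handling the gradient contribution $2\left(\Delta_h\tilde{\phi}^{k+\frac{1}{2}}, \phi^{k+1}-\phi^k\right)_{L^2}$ in the presence of the explicit extrapolation $\tilde\phi^{k+\frac{1}{2}} = (3\phi^k - \phi^{k-1})/2$. The trick is the algebraic identity $\tilde\phi^{k+\frac{1}{2}} = \phi^{k+\frac{1}{2}} - \tfrac{s^2}{2}D_s^2\phi^k$, which splits this contribution in two. After summation by parts, the first piece produces exactly the telescoping $-\left(\nrm{\nabla_h\phi^{k+1}}_2^2 - \nrm{\nabla_h\phi^k}_2^2\right)$ needed to complete $F(\phi^{k+1}) - F(\phi^k)$. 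The second piece equals $s^2\left(\nabla_h D_s^2\phi^k, \nabla_h(\phi^{k+1}-\phi^k)\right)_{L^2}$; writing $s^2 D_s^2\phi^k = (\phi^{k+1}-\phi^k) - (\phi^k - \phi^{k-1})$ and applying the polarization identity $(A-B, A)_{L^2} = \tfrac{1}{2}\left(\nrm{A}_2^2 - \nrm{B}_2^2\right) + \tfrac{1}{2}\nrm{A-B}_2^2$ decomposes it into the telescoping difference $\tfrac{1}{2}\left(\nrm{\nabla_h(\phi^{k+1}-\phi^k)}_2^2 - \nrm{\nabla_h(\phi^k-\phi^{k-1})}_2^2\right)$, absorbed into the auxiliary energy $\widetilde{\mathcal{F}}$, together with the strictly non-negative residual $\tfrac{s^4}{2}\nrm{\nabla_h D_s^2\phi^k}_2^2$. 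Collecting all contributions yields the claimed equality, and unconditional stability follows at once since the two residual terms on the left-hand side are non-negative for any $s,h>0$.
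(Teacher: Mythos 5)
Your proof is correct and follows essentially the same route as the paper's: testing the update for $\psi$ against $\psi^{k+\frac{1}{2}}$ in the ``$-1$'' inner product, converting the flux term to $\left(\mu^{k+\frac{1}{2}},\phi^{k+1}-\phi^k\right)$, and telescoping each constituent of $\mu^{k+\frac{1}{2}}$, with the splitting $\tilde\phi^{k+\frac{1}{2}}=\phi^{k+\frac{1}{2}}-\frac{s^2}{2}D_s^2\phi^k$ reproducing exactly the gradient identity the paper states at the start of its proof. The only difference is that you derive that identity explicitly rather than asserting it, which is a useful addition but not a different argument.
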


	\begin{proof}
Using the identity 
	\begin{eqnarray}
h^2\ciptwo{\phi^{k+1} - \phi^k}{2\Delta_h \tilde\phi^k} &= &\displaystyle  -\nrm{ \nabla_h \phi^{k+1}}_2^2 + \frac{1}{2} \nrm{ \nabla_h \left(\phi^{k+1} - \phi^k \right)}_2^2
	\nonumber
	\\
&& + \nrm{\nabla_h \phi^k}_2^2 - \frac{1}{2} \nrm{\nabla_h\left(\phi^k - \phi^{k-1}\right)}_2^2
	\nonumber
	\\
&& + \frac{1}{2} \nrm{\nabla_h \left( \phi^{k+1} - 2 \phi^k +\phi^{k-1}\right)}_2^2						                         
	\end{eqnarray}
and Eq. (\ref{mpfc_scheme2_3}) we obtain
	\begin{eqnarray}
h^2\ciptwo{s\psi^{k+\frac{1}{2}}}{\mu^{k+\frac{1}{2}}} &= &h^2\ciptwo{\phi^{k+1} - \phi^k}{\mu^{k+\frac{1}{2}}}
	\nonumber
	\\
&= & F\left(\phi^{k+1}\right) + \frac{1}{2} \nrm{\nabla_h\left(\phi^{k+1} - \phi^k\right)}^2_2
	\nonumber
	\\
&& - F\left(\phi^{k}\right)- \frac{1}{2} \nrm{\nabla_h\left(\phi^{k} - \phi^{k-1}\right)}^2_2 
	\nonumber
	\\
&& + \frac{1}{2} \nrm{\nabla_h \left( \phi^{k+1} - 2 \phi^k +\phi^{k-1} \right)}_2^2 .
	\label{estimate_1}
	\end{eqnarray}
Next, using Eq.~(\ref{mpfc_scheme2_1}) and properties of the norm $\nrm{\, \cdot\, }_{-1}$ (see App.~\ref{app-summation-by-parts}), we have
	\begin{eqnarray}
\frac{1}{2M}  \nrm{\psi^{k+1}}^2_{-1} -  \frac{1}{2M} \nrm{\psi^{k}}^2_{-1} &=& \frac{h^2}{M}\ciptwo{\psi^{k+\hf}}{ \psi^{k+1} - \psi^{k}}_{-1} 
	\nonumber
	\\
&=&  \frac{h^2}{M} \ciptwo{\psi^{k+\frac{1}{2}}}{ s\Delta_h \mu^{k+\frac{1}{2}} - s\beta  \psi^{k+\frac{1}{2}}}_{-1} 
	\nonumber
	\\
&=& - s\beta \nrm{ \psi^{k+\frac{1}{2}}}_{-1}^2 + s h^2 \ciptwo{\psi^{k+\frac{1}{2}}}{-\Delta_h^{-1} \left( \Delta_h \mu^{k+\frac{1}{2}} \right)}
	\nonumber
	\\
&=&- s \beta \nrm{\psi^{k+\frac{1}{2}}}_{-1}^2 - s h^2 \ciptwo{\psi^{k+\frac{1}{2}}}{\mu^{k+\frac{1}{2}}}.
	\label{estimate_2}														\end{eqnarray}
Adding Eq.~(\ref{estimate_1}) and Eq.~(\ref{estimate_2}) we have the result.
	\end{proof}

	\section{Numerical Results}
	\label{sec-numerical-results}

In this section we present some numerical results for the convex splitting schemes that illustrate the convergence rates and energy stability of the schemes.  We conclude the section by applying the model to a problem of crystallization in an undercooled melt. 

	\subsection{Convergence Tests}
	
In order to study the theoretical convergence of the two schemes we evolve the same initial data in time with increasingly finer grid spacing and compare the solutions. We choose a domain $\Omega = (0,32) \times (0,32) $ with an initial density field 
	\begin{eqnarray}
\phi (x,y,t=0) &=&  0.07 - 0.02 \cos \left( \frac{2\pi (x-12)}{32} \right) \sin \left( \frac{2\pi (y-1)}{32} \right) \nonumber
	\\
&& + 0.02 \cos^2 \left( \frac{\pi (x+10)}{32} \right) \cos^2 \left( \frac{\pi (y+3)}{32} \right)  \nonumber
	\\
&&  - 0.01 \sin^2 \left( \frac{4\pi x}{32} \right) \sin^2 \left( \frac{4\pi (y-6)}{32} \right) .
	\label{initial-data}
	\end{eqnarray}
The parameters are taken to be $M=1$, $\epsilon =0.025$ ($\alpha = 1 - \epsilon$) and $\beta =0.9$, and we evolve the system to final time $t_f = 10$.  For these initial data and parameters, the system evolves toward a nonuniform steady state.

The time step refinement paths are given by  $s=0.025 h^2$ for the first-order convex splitting scheme (\ref{scheme1_3}) -- (\ref{discrete_1_b}) and $s=0.05 h$ for the second-order convex splitting scheme  (\ref{mpfc_scheme2_3}) -- (\ref{discrete_2_b}).  Note that these refinement paths have nothing to do with CFL-type stability constraints.  The schemes are unconditionally stable.  The important point is that, using these paths, in both cases the global error is predicted to be $\mathcal{E}(t_f) = \mathcal{O}(h^2)$, and this will be verified in the tests.  The fine details of the convergence tests are the same as those described in~\cite{hu09,shen11}.  We skip the specifics and direct the interested reader to those references for a more complete discussion.  The essential idea is to compare solutions at successively finer resolutions, one representing the coarse resolution, $h = h_c$, and one the fine resolution, $h=h_f$. 

The mesh spacings are taken to be $h = \frac{32}{32}$, $\frac{32}{64}$, $\frac{32}{128}$, $\frac{32}{256}$, and $\frac{32}{512}$, each one half the size of the previous.  The results of the tests are given in Tabs.~\ref{tab1} and \ref{tab2} for the first and second order convex splitting schemes,  respectively.  The test data provide evidence that both schemes are second order accurate in space.  They suggest that the scheme (\ref{scheme1_3}) -- (\ref{discrete_1_b}) is first-order accurate in time, \emph{i.e.}, $\mathcal{E}_{1}(t_f) = \mathcal{O}(s)+\mathcal{O}(h^2)$, while (\ref{mpfc_scheme2_3}) -- (\ref{discrete_2_b}) is second order accurate in time, \emph{i.e.}, $\mathcal{E}_{2}(t_f) = \mathcal{O}(s^2)+\mathcal{O}(h^2)$, as expected.

	\subsection{Multigrid Efficiency Test}
	
We now present a test, similar to one from~\cite{wise10}, that shows the efficiency of our nonlinear multigrid solvers described in App.~\ref{app-nonlinear-multigrid}. Specifically, we fix the time step size at the relatively large value of $s = 10.0$ and vary the spatial step sizes: $h = \frac{32}{32}$, $\frac{32}{64}$, $\frac{32}{128}$, $\frac{32}{256}$, and $\frac{32}{512}$.  We use the initial data (\ref{initial-data}), and the parameters are the same as in the last test: $M=1$, $\epsilon =0.025$ ($\alpha = 1 - \epsilon$), $\beta =0.9$, $\Omega = (0,32)\times(0,32)$.  We advance the solution using our second-order convex splitting scheme (\ref{mpfc_scheme2_3}) -- (\ref{discrete_2_b}) by exactly 20 time steps, so that the final time is $t_f = 200$.  We use $\ell_{\max}=1$ in the definition of the smoothing operator (cf. App.~\ref{subapp-multigrid-second}).  In Fig.~\ref{fig1} we report the reduction of the norm of the residual for each v-cycle iteration and for each space step size $h$.  The reader will observe that, independent of $h$, the norm of the residual is decreased by a factor of about 5.5.  This indicates that the norm of the error -- not the discretization error, but the algebraic iteration error of our implicit scheme -- is reduced by roughly the same amount.  This gives strong evidence that our multigrid solvers have optimal, or nearly optimal, complexity. The results of the corresponding test for the first-order scheme (\ref{scheme1_3}) -- (\ref{discrete_1_b}) are expected to be similar; we skip this test for the sake of brevity.

	\subsection{Energy Stability Test}
	\label{subsec-energy-stability}

In this section, we demonstrate the energy stability of our second-order convex-splitting scheme (\ref{mpfc_scheme2_3}) -- (\ref{discrete_2_b}).  We again use the initial data (\ref{initial-data}). The physical parameters are $M=1$, $\epsilon =0.025$ ($\alpha = 1 - \epsilon$), $\Omega = (0,32)\times(0,32)$, and $\beta = 0.01$.  These are the same as in the last couple of tests, except for the smaller value of $\beta$, which changes the time scaling and also emphasizes the wave nature of the equation more than in the other tests herein.  We use $h = \frac{32}{128}$ and $s = 0.1$ and advance the numerical solution to $t_f = 100$.  We plot the calculated discrete energies $\mathcal{F}$, defined in Eq. (\ref{discrete_energy_MPFC}); $\widetilde{\mathcal{F}}$, defined in Eq. (\ref{energy_numer}); and $F$, defined in Eq. (\ref{discrete_energy_PFC}), against time in Fig.~\ref{fig2}.  As pointed out earlier, the energy $F$ may be increasing in time; this phenomenon is observed here.  This test is somewhat contrived, since the value of $\beta$ may be smaller than what is typically used in practice.  If $\beta$ is large, in fact $F$ is often observed to be non-increasing.  However, this shows the point that $F$ is the incorrect energy for this dissipative system.  The only energy that is guaranteed to be non-increasing is $\widetilde{\mathcal{F}}$; this property is observed here.  But, note that $\mathcal{F}$ is nearly identical to $\widetilde{\mathcal{F}}$.  This phenomenon is expected when $s$ is sufficiently small.

	\subsection{Analysis of Effective Time Step Sizes}
	
We now proceed to study the effect of the choice of time step $s$ on the predicted crystal dynamics following similar tests in~\cite{cheng08, hu09}. We perform simulations on a domain of size $\Omega = (0,128)\times (0,128)$ and initial data $\phi_{i,j} = \bar{\phi} + \eta_{i,j}$ where $\bar{\phi}=0.07$ and $\eta_{i,j}$ is a uniformly distributed random number satisfying $|\eta_{i,j}| \leq 0.07$. The parameters are $M=1$, $\beta =0.9$, $\epsilon =0.025$ ($\alpha = 1-\epsilon$) and $h = 1$.  

To create a baseline, we first perform a simulation with the fully second order scheme (\ref{mpfc_scheme2_3}) -- (\ref{discrete_2_b}) with the relatively small time step size $s=s_1 =0.01$. The density fields at time $t=350, t=500, t=1500$ and $t=2500$ are shown in the first row of Fig.~\ref{fig3}.  Then we perform successive simulations with $s_2=1$, $s_3=10$ and $s_4=20$ (second, third, and fourth columns of Fig.~\ref{fig3}) again using the second-order convex splitting scheme (\ref{mpfc_scheme2_3}) -- (\ref{discrete_2_b}), with all other physical and numerical parameters being the same.  Following the procedure in~\cite{cheng08,hu09} we compare frames of the different simulations when the pseudo energy $\mathcal{F}$ (\ref{discrete_energy_MPFC}) values are the same, rather than when the times are the same.  Each column of Fig.~\ref{fig3} represents configurations of $\phi$ that have the same pseudo energy $\mathcal{F}$.  The scaled pseudo energy $\mathcal{F}$ is plotted for the $s=s_1=0.01$ case in Fig.~\ref{fig4}; it is observed to be non-increasing in time.

Now, we repeat the test using the first-order convex splitting scheme (\ref{scheme1_3}) -- (\ref{discrete_1_b}) and report the results in Fig.~\ref{fig5}.  But here, for the first row of Fig.~\ref{fig5}, we plot $\phi$ at the times for which the pseudo energy values match those for $\phi$ represented in the first row of Fig.~\ref{fig3}, calculated using the more accurate second-order scheme (\ref{mpfc_scheme2_3}) -- (\ref{discrete_2_b}).  Again, the $\phi$ configurations in each column of Fig.~\ref{fig5} have the same pseudo energy $\mathcal{F}$ values, which explains the discrepancies in the corresponding times.

Inspecting Figs.~\ref{fig3} and \ref{fig5}, the qualitative agreement of the density fields $\phi$ corresponding to the same pseudo energy values (plots in the same respective columns) is apparent.  However, the numerical times required to reach the specific pseudo energy levels can be wildly different.  For example, for the first-order convex splitting scheme for the $s_4$ case (Fig.~\ref{fig5} row 4) it takes $t=122640$ to reach the energy corresponding to the second order scheme at $t=2500$ for $s_1$, about 50 times longer. Meanwhile, the second order scheme reaches the same energy level for $s_4$ case at $t=5520$ indicating the higher time accuracy of the second order scheme.  The second-order convex splitting scheme is clearly more accurate, at least at a qualitative level, for this practical test.

Following~\cite{cheng08,hu09}, in order to study the quantitative difference in respective solutions (obtained with different time step sizes $s$) at the same final time $t_f$, we define the scaled difference
	\begin{equation}
D(s) = \frac{\nrm{\phi\left(\, \cdot,t_f;s_1\right) - \phi\left(\, \cdot\, ,t_f;s\right)}_2}{\nrm{\phi\left(\, \cdot\, ,t_f;s_1\right)}_2},
	\label{scaled-difference}
	\end{equation}
where we choose $t_f=350$.  The other parameters are the same as in the previous test: $M=1$, $\beta =0.9$, $\epsilon =0.025$ ($\alpha = 1-\epsilon$) and $h = 1$.  Figure~\ref{fig6} shows the log-log plot of the scaled difference obtained using the second order convex splitting scheme at time $t_f =350$ and the baseline time step size $s=s_1 =0.01$.  The results are quite similar to that observed for the PFC schemes in~\cite{cheng08,hu09}. The accuracy is observed to increase as the time step is decreased, while the difference (the error) tends to saturate for large time steps.

	\subsection{Growth of a Polycrystal from a Supercooled Liquid}

In this test, we apply our second-order scheme to solve a problem of polycrystallization in a supercooled liquid. We start our simulation with a constant density field $\bar{\phi}=0.285$ on a domain $\Omega = (0,804) \times (0,804)$. We place three random perturbations (small in spatial extent) along the bottom $y=0$ line of the domain as seeds for nucleation. The domain is discretized using a $2048\times 2048$ grid, \emph{i.e.}, $h=804/2048$.The parameters are set to be $M=1$, $\epsilon = 0.25$ ($\alpha = 1-\epsilon$), and $\beta =0.9$, and snapshots of the crystal microstructure are shown in Figs.~\ref{fig7a} -- \ref{fig7d}.

Here the boundary conditions are periodic in the horizontal direction and homogeneous Neumann on the top and bottom boundaries.  Our theory can be easily extended for these conditions.  This case study serves to show the applicability of the model to a physical problem; it also shows the relative flexibility of our multigrid/finite-difference framework over spectral and pseudo-spectral methods. In particular, it is a triviality to modify the nonlinear multigrid solver to adjust to the present mixed boundary conditions from fully periodic boundary conditions. This is not the case for spectral methods, where the entire method mat have to be redesigned for different boundary conditions.

Three different crystal grains are observed to nucleate from the seed sites and grow. These grains eventually grow large enough to form grain boundaries as they impinge upon one another. In Figs.~\ref{fig7b} -- \ref{fig7d}, the grain in the middle forms boundaries with the grains on the left and right.  However, the orientations of the two crystallites on the right and left extremes are so similar to one another that they relax without a definitive grain boundary, but form point defects (about $x=50$, $t=1000$ and $t = 2000$) along where the grains impinge. 

In order to study the effect of the damping term we repeat the simulation with the same initial data and parameters, with the exception that we take $\beta =10$ to obtain higher relative damping than in the last test. A single snapshot at time $t=3000$ for this new simulation is shown in Fig.~\ref{fig8}. The increased damping manifests itself in two significant ways compared to the previous case. The first is in the qualitative features of the grain boundaries, and the second is in the crystal growth rate.  

Some of the second can be explained by the change in the diffusion time scale with the modification of $\beta$. If we rescale the present case so that the diffusion time scale is the same as that of the last, we obtain
	\begin{equation}
\left(\frac{0.9}{10}\right)^2 \partial_{ss}\phi + 0.9 \partial_{s}\phi = \Delta\mu,
	\end{equation}
where $s = \frac{0.9}{10}t$.  Thus $t=3000$ corresponds to the synchronized diffusion time $s = 270$.  The earliest snapshot shown from the last simulation is at time $t = 500$ (Fig.~\ref{fig7a}).  Clearly for the high damping case the crystal is growing into its melt \emph{faster} (from the perspective of the synchronized diffusion time scale) than in the low damping case.

On the first point, the increased damping ($\beta =10$) of the propagation of waves allows the left grain and the middle grain to merge into one crystallite of a single grain with a single point defect located at the bottom (at $x=250$) unlike the case with $\beta =0.9$.  One should compare the microstructure with that of Fig.~\ref{fig7b} ($t = 1000$).   More quantitative differences between the low and high damping cases can be observed from the relative pseudo energies, which are plotted for both simulations in Fig.~\ref{fig9}.

	\subsection{Effect of Applied Strain on Coherent Crystal}
	\label{section_relaxation_test}

The key difference between the the PFC and MPFC models is the ability of the latter to capture the difference between the ``elastic" relaxation and ``diffusion" time scales~\cite{stefanovic06}.  To show the power of the MPFC model we perform a test that highlights this difference.  Specifically, we will study the relaxation of a strained crystal by tracking the displacement field with respect to the strain-free crystal configuration over time.

We take $\epsilon=0.6$ ($\alpha = 1-\epsilon$), $M =15^2$, $h=\frac{4\pi}{16 \sqrt{3}}$, $s=0.025 h^2$, and use the second-order convex splitting scheme for the tests.  We use two different values of $\beta$ in the tests.  The grid sizes are $m = 512$ (grid points in $x$ direction) and $n=400$ (grid points in $y$ direction).  A solid periodic crystal, 20 atomic layers thick, was formed in a liquid bath, as shown in Fig.~\ref{fig10}, in the coexistence region, \emph{i.e.}, where the solid and liquid coexist at equilibrium.  The coexistence solid and liquid densities for $\epsilon =0.6$ are $\phi_s = 0.395$ and $\phi_l=0.57$, respectively~\cite{elder04}. The solid region density field is chosen to be the one given by the single mode approximation to the equilibrium density~\cite{elder04}:
	\begin{equation}
\phi(x,y) = \phi_s + A \left[ \cos\left(q_t x\right) \cos \left ( \frac{q_t y}{\sqrt{3}} \right ) - \frac{1}{2} \cos \left(\frac{2q_t y}{\sqrt{3}} \right) \right] ,
	\label{single_mode_density}
	\end{equation}
where
	\begin{equation}
A=\frac{4}{5}\phi_s  + \frac{4}{15}\sqrt{ 15 \epsilon - 36\phi_s^2} ,   \quad  q_t = \frac{\sqrt{3}}{2}.
	\end{equation}

The bottom row of atoms in this crystal were pinned in this configuration and then annealed to equilibrium, leading to the strain-free microstructure shown in Fig.~\ref{fig9}.  Then a shear strain was applied by pinning the density field of the atoms along the top row and shifting the density field of the bottom row of atoms by 4\% of the horizontal lattice spacing, \emph{i.e.}, 4\% of $2\pi$, to the left.  The pinning of atoms and the application of strain is accomplished by following the technique of Stefanovic~\emph{et al.}~\cite{stefanovic06,stefanovic09}. The idea is to modify the continuous free energy, $E(\phi)$, to obtain
	\begin{equation}
\tilde{E}(\phi) := E(\phi) + \int_\Omega \mathcal{M}(x,y) \left( \phi(x,y) - \hat{\phi}(x,y) \right)^2  dx dy ,
	\end{equation}
where $\mathcal{M}\ge 0$ is a weight function and $\hat{\phi}$ is a prescribed crystal configuration.  We take $\mathcal{M} = 2$ in horizontal strips of vertical thickness $\pi$ (about half the lattice spacing) around the top and bottom of the crystal, and otherwise $\mathcal{M}$ is set to zero.  The function $\hat{\phi}$ is chosen similar to \eqref{single_mode_density} but modified slightly so as to effect the desired pinning and shear~\cite{stefanovic06,stefanovic09}. The incorporation of the modified energy into our second-order convex splitting scheme is straightforward. We simply replace $\mu^{k+\frac{1}{2}}$ with
	\begin{equation}
\tilde{\mu}^{k+\frac{1}{2}} = \mu^{k+\frac{1}{2}} + 2 \left(\phi^{k+\frac{1}{2}} - \hat{\phi} \right) \mathcal{M}.
	\end{equation}

The horizontal strain at an atom position is defined via $\varepsilon_{xx} := \frac{\Delta x}{2\pi}$, where $\Delta x$ is the horizontal displacement measured from the reference configuration. At steady state, we expect the strain to vary linearly from 4\% to 0\% along the height of the crystal. Figure~\ref{fig11} shows the time evolution of the strain field for two computational cases:  $\beta =0.2$ (triangles in Fig.~\ref{fig11}, low damping case) and $\beta = 9$ (squares in Fig.~\ref{fig11}, high damping case).   Note the oscillatory nature of the low damping MPFC case (triangles). In this case the crystal rapidly relaxes to the strained equilibrium state, represented by the straight line.  The high damping MPFC case behaves much more like the PFC model, as expected, with a slow relaxation to equilibrium, but without the oscillatory overshoot (even at long times, not shown) that is seen in the low damping case.

	\section{Concluding Remarks}
	\label{sec-conclusions}
	
In this paper we have presented two unconditionally (pseudo) energy stable finite difference methods for the MPFC model. Both are based on convex splittings of the physical energy.  One is first-order accurate in time and appeared in our previous works~\cite{wang10b,wang11}.  The other is second-order accurate in time; and the way we deal with the the chemical potential term in the scheme is similar to that in our second-order convex splitting scheme for the PFC model~\cite{hu09}.  What makes this a non-trivial extension of the previous method is the treatment of the second-order time derivative $\partial_{tt}\phi$.  The second-order approximation of this term relies on the introduction of a new variable $\psi := \partial_t \phi$.

We provided numerical evidence of the respective orders of accuracy and convergences of the two schemes, the efficiency of our nonlinear multigrid solvers, and the (pseudo) energy stability of our schemes.  In particular, we showed that, even though our schemes are nonlinear, they can be solved with optimal (or near optimal) complexity using nonlinear FAS multigrid solvers.  We demonstrated that the second order accurate scheme generally captures the time evolution with greater accuracy and efficiency than the first order scheme, which is of course what would be expected.  We concluded the paper with a couple of practical computational examples, one of solidification and one dealing with elastic relaxation in a strained crystal.

We take this opportunity to emphasize the significance of the second-order convex splitting scheme and attempt to put the work in some broader context.  The novelty of our second-order convex splitting scheme is not that it enjoys an unconditional pseudo energy stability (ES) property and not that it enjoys an unconditionally unique solvability (US) property but that it enjoys \emph{both properties simultaneously}.  One can always construct a scheme that satisfies one or other of the two individually.  For example, fully  explicit schemes are usually always unconditionally uniquely solvable (trivially).  And one can always construct a second-order energy stable scheme using a secant-type (or Crank-Nicolson-like) approach.  Here, roughly speaking, given an energy $E$, one takes
	\begin{equation}
\mu^{k+\hf} := \frac{E\left(\phi^{k+1}\right) - E\left(\phi^k\right)}{\phi^{k+1} - \phi^k} ,
	\end{equation}
as in~\cite{du91}. (In fact, this is the idea we use to deal with the convex portion of the energy (and the chemical potential) in our own second-order schemes, here and in~\cite{hu09}.)  While, this will lead to an unconditionally energy stable scheme, the question of solvability is much harder.  Typically, unconditional solvability must be sacrificed.

As we have pointed out, our scheme enjoys both properties ES and US.  But this comes at a price.  Namely, we use a multistep method, and it is more difficult in this setting to employ adaptive time stepping.  What would be ideal is a one-step, second-order scheme enjoying both properties ES and US. In this case time adaptivity could be more straightforward.  Regardless of whether it is using a new second-order scheme or the present one, adaptive time and space stepping strategies constitute an important future  research direction.
 
	\section*{Acknowledgements}

JL and AB acknowledge partial support from NSF-CHE 1035218, JL also gratefully acknowledges partial support from NSF-DMS 0914720 (as does PZ), NSF-DMS 0915128, and NSF-DMR 1105409. CW acknowledges partial support from NSF-DMS 1115420, NSF-DCNS 0959382 and AFOSR-10418149. SMW acknowledges partial support from NSF-DMS 1115390.

	\appendix

	\renewcommand{\theequation}{A.\arabic{equation}}

	\section{Finite Difference Discretization on a Staggered Grid}
	\label{app-summation-by-parts}
      
For simplicity, we assume that $\Omega = (0,L_x)\times(0,L_y)$.  The framework that we describe has a straightforward extension to three space dimensions. Here we use the notation and results for cell-centered functions from~\cite{wise10,wise09}, see also~\cite{hu09,wang11}.  The reader is directed to those references for more complete details.  We begin with definitions of grid functions and difference operators needed for our discretization of two-dimensional space.  Let $\Omega = (0,L_x)\times(0,L_y)$, with $L_x = m\cdot h$ and $L_y = n\cdot h$, where $m$ and $n$ are positive integers and $h>0$ is the spatial step size. Define $p_r := (r-\hf)\cdot h$, where $r$ takes on integer and half-integer values.   For any positive integer $\ell$, define $E_\ell = \left\{ p_r \ \middle|\ r=\frac{1}{2},\ldots, \ell+\frac{1}{2}\right\}$, $C_\ell = \left\{p_r \ \middle|\ r=1,\ldots, \ell\right\}$,  $C_{\overline{\ell}} = \left\{ p_r\cdot h\ \middle|\ r=0,\ldots, \ell+1\right\}$. Define the function spaces
	\begin{eqnarray}
{\mathcal C}_{m\times n} &=& \left\{\phi: C_m\times C_n \rightarrow \mathbb{R} \right\},\  {\mathcal C}_{\overline{m}\times\overline{n}} = \left\{\phi: C_{\overline{m}}\times C_{\overline{n}}\rightarrow \mathbb{R} \right\}  ,
    \\
{\mathcal C}_{\overline{m}\times n} &=& \left\{\phi: C_{\overline{m}}\times C_n \rightarrow \mathbb{R} \right\},\ {\mathcal C}_{m\times\overline{n}} = \left\{\phi: C_m\times C_{\overline{n}} \rightarrow \mathbb{R} \right\}  ,
	\\
{\mathcal E}^{\rm ew}_{m\times n} &=& \left\{u: E_m\times C_n \rightarrow\mathbb{R} \right\},\ {\mathcal E}^{\rm ns}_{m\times n} = \left\{v: C_m\times E_n \rightarrow\mathbb{R}  \right\}	 ,
	\\
{\mathcal E}^{\rm ew}_{m\times \overline{n}} &=& \left\{u: E_m\times C_{\overline{n}} \rightarrow\mathbb{R} \right\},\ {\mathcal E}^{\rm ns}_{\overline{m}\times n} = \left\{v: C_{\overline{m}}\times E_n \rightarrow\mathbb{R}  \right\}	 .
  \end{eqnarray}
We use the notation $\phi_{i,j} := \phi\left(p_i,p_j\right)$ for \emph{cell-centered} functions, those in the spaces ${\mathcal C}_{m\times n}$, ${\mathcal C}_{\overline{m}\times n}$, ${\mathcal C}_{m\times\overline{n}}$, or ${\mathcal C}_{\overline{m}\times\overline{n}}$. In component form \emph{east-west edge-centered} functions, those in the spaces ${\mathcal E}^{\rm ew}_{m\times n}$ or ${\mathcal E}^{\rm ew}_{m\times \overline{n}}$, are identified via $u_{i+\hf,j}:=u(p_{i+\hf},p_j)$.  In component form \emph{north-south edge-centered} functions, those in the spaces ${\mathcal E}^{\rm ns}_{m\times n}$, or ${\mathcal E}^{\rm ns}_{\overline{m}\times n}$, are identified via $u_{i+\hf,j}:=u(p_{i+\hf},p_j)$.  The functions of ${\mathcal V}_{m\times n}$ are called \emph{vertex-centered} functions.

We need the weighted 2D grid inner-products $\ciptwo{\, \cdot \,}{\, \cdot \,}$, $\eipew{\, \cdot \,}{\, \cdot \,}$, $\eipns{\, \cdot \,}{\, \cdot \,}$ that are defined in~\cite{wise10,wise09}.  In addition to these, we also need the following one-dimensional inner-products:
	\begin{equation}
\cip{f_{\star,j+\hf}}{g_{\star,j+\hf}} = \sum_{i=1}^m f_{i,j+\hf}g_{i,j+\hf} , \quad \cip{f_{i+\hf,\star}}{g_{i+\hf,\star}} = \sum_{j=1}^n f_{i+\hf,j}g_{i+\hf,j}  ,	
	\end{equation}
where the first is defined for $f,\, g\in{\mathcal E}^{\rm ns}_{m\times n}$, and the second for $f,\, g\in{\mathcal E}^{\rm ew}_{m\times n}$.

The reader is referred to~\cite{wise10,wise09} for the precise definitions of the edge-to-center difference operators  $d_x : {\mathcal E}_{m\times n}^{\rm ew}\rightarrow{\mathcal C}_{m\times n}$ and $d_y : {\mathcal E}_{m\times n}^{\rm ns}\rightarrow{\mathcal C}_{m\times n}$; the $x-$dimension center-to-edge average and difference operators, respectively, $A_x,\, D_x: {\mathcal C}_{\overline{m}\times n}\rightarrow{\mathcal E}_{m\times n}^{\rm ew}$;  the $y-$dimension center-to-edge average and difference operators, respectively, $A_y,\, D_y: {\mathcal C}_{m\times \overline{n}}\rightarrow{\mathcal E}_{m\times n}^{\rm ns}$; and the standard 2D discrete Laplacian, $\Dh : {\mathcal C}_{\overline{m}\times\overline{n}}\rightarrow{\mathcal C}_{m\times n}$.

In this paper we use grid functions satisfying either periodic or homogeneous Neumann boundary conditions, or some appropriate combination of these.  Specifically, we shall say the cell-centered function $\phi\in {\mathcal C}_{\overline{m}\times\overline{n}}$ satisfies homogeneous Neumann boundary conditions if and only if
	\begin{eqnarray}
\phi_{m+1,  j} = \phi_{m,j}, \quad \phi_{  0,  j} &=& \phi_{1,j}, \quad j = 1,\ldots,n \ ,
	\label{h-n-bcs-1}
	\\
\phi_{  i,n+1} = \phi_{i,n}, \quad \phi_{  i,  0} &=& \phi_{i,1}, \quad i = 0,\ldots,m+1 \ .
	\label{h-n-bcs-2}
	\end{eqnarray}
We say that the cell-centered function $\phi\in {\mathcal C}_{\overline{m}\times\overline{n}}$ satisfies periodic boundary conditions if and only if
	\begin{eqnarray}
\phi_{m+1,  j} = \phi_{1,j}, \quad \phi_{0,  j} &=& \phi_{m,j}, \quad j = 1,\ldots,n \ ,
	\label{per-bcs-1}
	\\
\phi_{i,n+1} = \phi_{i,1}, \quad \phi_{i,0} &=& \phi_{i,n}, \quad i = 0,\ldots,m+1 \ .
	\label{per-bcs-2}
	\end{eqnarray}

We will use the grid function norms defined in~\cite{wise10,wise09}.  The reader is referred to those works for the precise definitions of  $\nrm{\, \cdot \,}_2$, $\nrm{\, \cdot \,}_\infty$, $\nrm{\, \cdot \,}_p$ ($1\le p < \infty$), $\nrm{\, \cdot \,}_{0,2}$, $\nrm{\, \cdot \,}_{1,2}$, and $\nrm{\phi}_{2,2}$.

Using the definitions given in this Appendix and in~\cite{wise10,wise09}, we obtain the following summation-by-parts formulae:
	\begin{prop}
	\label{sbp-2D-edge}
{\em (summation-by-parts)} If $\phi\in{\mathcal C}_{\overline{m}\times n} \cup{\mathcal C}_{\overline{m}\times\overline{n}}$ and $f\in{\mathcal E}_{m\times n}^{\rm ew}$ then
	\begin{eqnarray}
h^2\, \eipew{D_x \phi}{f} &=& -h^2\, \ciptwo{\phi}{d_x f}
	\nonumber
	\\
&&-h\, \cip{A_x\phi_{\hf,\star}}{f_{\hf,\star}}+h\, \cip{A_x\phi_{m+\hf,\star}}{f_{m+\hf,\star}} \ ,
	\label{sbp-c-ew}
	\end{eqnarray}
and if $\phi\in{\mathcal C}_{m\times\overline{n}} \cup {\mathcal C}_{\overline{m}\times\overline{n}}$ and $f\in{\mathcal E}_{m\times n}^{\rm ns}$ then
	\begin{eqnarray}
h^2\, \eipns{D_y\phi}{f} &=& -h^2\, \ciptwo{\phi}{d_y f} 
	\nonumber
	\\
&&-h\, \cip{A_y\phi_{\star,\hf}}{f_{\star,\hf}}+h\, \cip{A_y\phi_{\star,n+\hf}}{f_{\star,n+\hf}} \ .
	\label{sbp-c-ns}
	\end{eqnarray}
	\end{prop}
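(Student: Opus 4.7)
The plan is to reduce the claim to discrete Abel summation (summation by parts in one variable) with careful bookkeeping of the boundary indices, then note that the second identity is proved identically by interchanging the roles of $x$ and $y$. Throughout, I will use the definitions of $D_x$, $d_x$, $A_x$, the cell inner product $\ciptwo{\cdot}{\cdot}$, the east-west edge inner product $\eipew{\cdot}{\cdot}$, and the one-dimensional edge inner product $\cip{\cdot}{\cdot}$ given in the preceding portion of the appendix and in \cite{wise10,wise09}. In particular, the key fact needed is that the east-west edge inner product carries trapezoidal weighting in the $x$-direction, namely
\begin{equation*}
h^2\,\eipew{u}{f} \;=\; h^2\sum_{j=1}^n\!\left[\tfrac{1}{2}u_{\hf,j}f_{\hf,j} \;+\;\sum_{i=1}^{m-1}u_{i+\hf,j}f_{i+\hf,j} \;+\;\tfrac{1}{2}u_{m+\hf,j}f_{m+\hf,j}\right],
\end{equation*}
while $d_x f_{i,j}=(f_{i+\hf,j}-f_{i-\hf,j})/h$ and $D_x\phi_{i+\hf,j}=(\phi_{i+1,j}-\phi_{i,j})/h$ and $A_x\phi_{i+\hf,j}=\tfrac12(\phi_{i+1,j}+\phi_{i,j})$.

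First I would fix $j\in\{1,\dots,n\}$ and focus on the inner single-row sum. After substituting the definition of $D_x$ and pulling one factor of $h$ outside, the row contribution to $h^2\eipew{D_x\phi}{f}$ becomes
\begin{equation*}
h\!\left[\tfrac{1}{2}(\phi_{1,j}-\phi_{0,j})f_{\hf,j}+\sum_{i=1}^{m-1}(\phi_{i+1,j}-\phi_{i,j})f_{i+\hf,j}+\tfrac{1}{2}(\phi_{m+1,j}-\phi_{m,j})f_{m+\hf,j}\right].
\end{equation*}
The central step is Abel summation applied to the interior sum, which yields
\begin{equation*}
\sum_{i=1}^{m-1}(\phi_{i+1,j}-\phi_{i,j})f_{i+\hf,j} \;=\; -\phi_{1,j}f_{3/2,j}\;-\;\sum_{i=2}^{m-1}\phi_{i,j}\bigl(f_{i+\hf,j}-f_{i-\hf,j}\bigr)\;+\;\phi_{m,j}f_{m-\hf,j}.
\end{equation*}

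Next I would collect all terms that multiply each individual $\phi_{i,j}f_{k+\hf,j}$. The interior coefficients combine, together with the ``full-weight'' boundary contributions $-\phi_{1,j}f_{3/2,j}$ and $\phi_{m,j}f_{m-\hf,j}$, into the complete telescoped sum $-h\sum_{i=1}^m \phi_{i,j}\bigl(f_{i+\hf,j}-f_{i-\hf,j}\bigr) = -h^2\sum_{i=1}^m\phi_{i,j}\,d_xf_{i,j}$. The remaining leftover contributions come exclusively from the trapezoidal half-weights at $i=\hf$ and $i=m+\hf$, and a direct inspection shows they pair precisely as
\begin{equation*}
-\tfrac{h}{2}(\phi_{0,j}+\phi_{1,j})f_{\hf,j}\;+\;\tfrac{h}{2}(\phi_{m,j}+\phi_{m+1,j})f_{m+\hf,j} \;=\; -h\,A_x\phi_{\hf,j}\,f_{\hf,j}\;+\;h\,A_x\phi_{m+\hf,j}\,f_{m+\hf,j}.
\end{equation*}
This is exactly where the averaging operator $A_x$ materializes in the boundary term: it is forced by the half-weights in the edge inner product combined with the elementary identity $\phi_i+\phi_{i+1}=2A_x\phi_{i+\hf}$. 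Summing over $j=1,\dots,n$ converts the interior sum into $-h^2\ciptwo{\phi}{d_xf}$ and the two boundary sums into $\mp h\cip{A_x\phi_{\hf,\star}}{f_{\hf,\star}}$ and $\pm h\cip{A_x\phi_{m+\hf,\star}}{f_{m+\hf,\star}}$, yielding \eqref{sbp-c-ew}.

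For the $y$-direction identity \eqref{sbp-c-ns} I would repeat the same argument verbatim, fixing $i$, performing Abel summation in the $j$-index from $1$ to $n-1$, and identifying the boundary averages $A_y\phi_{\star,\hf}$ and $A_y\phi_{\star,n+\hf}$ via the analogous trapezoidal half-weights of the north-south edge inner product. The only subtlety worth highlighting in the write-up is that the hypothesis $\phi\in\mathcal{C}_{\overline{m}\times n}\cup\mathcal{C}_{\overline{m}\times\overline{n}}$ (resp.\ $\mathcal{C}_{m\times\overline{n}}\cup\mathcal{C}_{\overline{m}\times\overline{n}}$) is exactly what is needed so that the ghost values $\phi_{0,j},\phi_{m+1,j}$ (resp.\ $\phi_{i,0},\phi_{i,n+1}$) entering $A_x\phi_{\hf,j}$ and $A_x\phi_{m+\hf,j}$ are defined; no use of specific periodic or homogeneous Neumann boundary conditions is made at this stage. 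The main ``obstacle'' here is purely clerical --- keeping the half-weighted endpoints, the interior sum, and the Abel boundary remainders straight --- but once the coefficients of each $\phi_{i,j}f_{k+\hf,j}$ are tabulated, the result follows immediately.
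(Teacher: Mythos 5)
Your proof is correct. The paper itself gives no proof of Proposition~\ref{sbp-2D-edge} --- it states the identities as consequences of the definitions and defers to the cited references --- and your direct computation (expanding the trapezoidally weighted edge inner product, performing Abel summation row by row, and recognizing the leftover half-weight boundary terms as $A_x\phi_{\hf,\star}$ and $A_x\phi_{m+\hf,\star}$) is exactly the standard argument used there; your bookkeeping of the coefficients checks out, and your remark that the hypotheses on $\phi$ are needed only so that the ghost values entering the boundary averages are defined is also accurate.
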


	\begin{prop}
	\label{green1stthm-2d}
\emph{(discrete Green's first identity)} Let $\phi,\, \psi\in {\mathcal C}_{\overline{m}\times\overline{n}}$.  Then
	\begin{eqnarray}
h^2\, \eipew{D_x\phi}{D_x\psi} &+&h^2\, \eipns{D_y\phi}{D_y\psi}
	\nonumber
	\\
&=& -h^2\, \ciptwo{\phi}{\Dh\psi}
	\nonumber
	\\
& & - h\, \cip{A_x\phi_{\hf,\star}}{D_x\psi_{  \hf,\star}} + h\, \cip{A_x\phi_{m+\hf,\star}}{D_x \psi_{m+\hf,\star}}
	\nonumber
	\\
& & - h\, \cip{A_y\phi_{\star,  \hf}}{D_y\psi_{\star,  \hf}} + h\, \cip{A_y \phi_{\star,n+\hf}}{D_y\psi_{\star,n+\hf}} \ . 
	\nonumber
	\\
& &
	\end{eqnarray}
	\end{prop}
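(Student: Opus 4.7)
The plan is to reduce the claimed 2D discrete Green's identity to two applications of the one-directional summation-by-parts formulae of Proposition~\ref{sbp-2D-edge}, and then recombine the two interior terms using the natural decomposition $\Dh \psi = d_x D_x \psi + d_y D_y \psi$ of the standard five-point Laplacian into an edge-to-center divergence of a center-to-edge gradient. This decomposition is built into the definition of $\Dh$ given in Appendix~\ref{app-summation-by-parts} (see~\cite{wise10,wise09}) and requires no additional argument beyond comparing components.

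First I would check that the hypotheses of Proposition~\ref{sbp-2D-edge} are actually met. Since $\psi\in\mathcal{C}_{\overline{m}\times\overline{n}}$, the operator maps stated in Appendix~\ref{app-summation-by-parts} give $D_x\psi\in\mathcal{E}^{\rm ew}_{m\times n}$ and $D_y\psi\in\mathcal{E}^{\rm ns}_{m\times n}$, so both are legitimate edge-centered test functions. I would then apply the $x$-direction formula (\ref{sbp-c-ew}) with $f := D_x\psi$ to obtain
\begin{equation*}
h^2\eipew{D_x\phi}{D_x\psi} = -h^2\ciptwo{\phi}{d_x D_x\psi} - h\cip{A_x\phi_{\hf,\star}}{D_x\psi_{\hf,\star}} + h\cip{A_x\phi_{m+\hf,\star}}{D_x\psi_{m+\hf,\star}},
\end{equation*}
and, completely analogously, apply the $y$-direction formula (\ref{sbp-c-ns}) with $f := D_y\psi$ to obtain
\begin{equation*}
h^2\eipns{D_y\phi}{D_y\psi} = -h^2\ciptwo{\phi}{d_y D_y\psi} - h\cip{A_y\phi_{\star,\hf}}{D_y\psi_{\star,\hf}} + h\cip{A_y\phi_{\star,n+\hf}}{D_y\psi_{\star,n+\hf}}.
\end{equation*}

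Adding the two identities, the four one-dimensional boundary terms assemble exactly into the four edge-averaged boundary sums appearing in the conclusion. For the two interior contributions, I would invoke the componentwise identity $\Dh\psi = d_x D_x\psi + d_y D_y\psi$ together with bilinearity of $\ciptwo{\cdot}{\cdot}$ to combine
\begin{equation*}
-h^2\ciptwo{\phi}{d_x D_x\psi} - h^2\ciptwo{\phi}{d_y D_y\psi} = -h^2\ciptwo{\phi}{\Dh\psi},
\end{equation*}
which is the final interior term in the statement.

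The main obstacle, insofar as there is one, is simply verifying the operator identity $\Dh = d_x D_x + d_y D_y$ at the level of the componentwise definitions in Appendix~\ref{app-summation-by-parts}; this is a standard and immediate calculation in the staggered-grid framework of~\cite{wise10,wise09}. No assumption on boundary conditions for $\phi$ or $\psi$ is needed, since the boundary terms appear explicitly; the corollaries for the periodic and homogeneous Neumann cases then follow by inspection from (\ref{h-n-bcs-1})--(\ref{per-bcs-2}), which force the edge boundary averages or differences to cancel in pairs.
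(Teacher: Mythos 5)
Your argument is correct and is essentially the intended derivation: the paper presents Proposition~\ref{sbp-2D-edge} precisely so that the Green's first identity follows by applying (\ref{sbp-c-ew}) with $f=D_x\psi$ and (\ref{sbp-c-ns}) with $f=D_y\psi$, adding, and using the componentwise decomposition $\Dh = d_x D_x + d_y D_y$ of the five-point Laplacian from~\cite{wise10,wise09}. The operator-domain check and the observation that no boundary conditions are needed (since the boundary sums are retained explicitly) are both handled correctly.
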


	\begin{prop}\label{green2ndthm-2d}
	\emph{(discrete Green's second identity)} Let $\phi,\, \psi\in {\mathcal C}_{\overline{m}\times\overline{n}}$.  Then
	\begin{eqnarray}
h^2\, \ciptwo{\phi}{\Dh\psi} &=& h^2\, \ciptwo{\Dh\phi}{\psi}
	\nonumber
	\\
& & + h\, \cip{A_x \phi_{m+\hf,\star}}{D_x \psi_{m+\hf,\star}} - h\, \cip{D_x \phi_{m+\hf,\star}}{A_x \psi_{m+\hf,\star}}
	\nonumber
	\\
& & - h\, \cip{A_x \phi_{  \hf,\star}}{D_x \psi_{  \hf,\star}} + h\, \cip{D_x \phi_{  \hf,\star}}{A_x \psi_{  \hf,\star}}
	\nonumber
	\\
& & + h\, \cip{A_y \phi_{\star,n+\hf}}{D_y \psi_{\star,n+\hf}} - h\, \cip{D_y \phi_{\star,n+\hf}}{A_y \psi_{\star,n+\hf}}
	\nonumber
	\\
	& & - h\, \cip{A_y \phi_{\star,  \hf}}{D_y \psi_{\star,  \hf}} + h\, \cip{D_y \phi_{\star,  \hf}}{A_y \psi_{\star,  \hf}} \ .
	\end{eqnarray} 
	\end{prop}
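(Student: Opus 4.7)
The plan is to derive the discrete Green's second identity by applying the discrete Green's first identity (Proposition~\ref{green1stthm-2d}) twice and subtracting, exploiting the symmetry of the bulk inner products in $\phi$ and $\psi$.

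First I would write down Proposition~\ref{green1stthm-2d} as stated, viewing it as an expression for $-h^2\ciptwo{\phi}{\Delta_h \psi}$ in terms of the symmetric quantity $h^2\eipew{D_x\phi}{D_x\psi} + h^2\eipns{D_y\phi}{D_y\psi}$ plus four edge boundary contributions of the form $\pm h\cip{A_x \phi_{\cdot,\star}}{D_x \psi_{\cdot,\star}}$ and $\pm h\cip{A_y \phi_{\star,\cdot}}{D_y \psi_{\star,\cdot}}$. Then I would write the \emph{same} identity with the roles of $\phi$ and $\psi$ interchanged, obtaining an expression for $-h^2\ciptwo{\psi}{\Delta_h \phi} = -h^2\ciptwo{\Delta_h \phi}{\psi}$ in terms of the identical bilinear form $h^2\eipew{D_x\psi}{D_x\phi} + h^2\eipns{D_y\psi}{D_y\phi}$ and four analogous boundary terms with $\phi$ and $\psi$ swapped.

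Next I would subtract the two identities. Because the one-dimensional and two-dimensional edge-centered inner products $\eipew{\cdot}{\cdot}$ and $\eipns{\cdot}{\cdot}$ are symmetric in their arguments (a direct consequence of their pointwise definitions in Appendix~\ref{app-summation-by-parts}), the bulk terms cancel exactly. Rearranging the remaining identity yields
\[
h^2\ciptwo{\phi}{\Delta_h \psi} - h^2\ciptwo{\Delta_h \phi}{\psi} = \bigl[\text{boundary terms from the }\phi{\leftrightarrow}\psi\text{ swap}\bigr],
\]
where the right-hand side is the termwise difference of the boundary contributions. Reading off these differences, each pair $-h\cip{A_x \phi_{\hf,\star}}{D_x \psi_{\hf,\star}}$ from the first invocation and $+h\cip{A_x \psi_{\hf,\star}}{D_x \phi_{\hf,\star}}$ from the swapped invocation combines into exactly $-h\cip{A_x \phi_{\hf,\star}}{D_x \psi_{\hf,\star}} + h\cip{D_x \phi_{\hf,\star}}{A_x \psi_{\hf,\star}}$, matching one of the four pairs in the statement. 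The same reorganization at $m+\hf$ in $x$, and at $\hf$ and $n+\hf$ in $y$, reproduces the remaining six boundary terms.

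The step that requires the most care is the bookkeeping of signs and indices when swapping $\phi$ and $\psi$ in the boundary contributions, since each of the four edges contributes a pair $A\phi\cdot D\psi$ versus $D\phi\cdot A\psi$ with opposite signs, and one must verify that the symmetry $\cip{f}{g}=\cip{g}{f}$ in the one-dimensional inner products is enough to collapse the $\phi\leftrightarrow\psi$ swap into precisely the asymmetric combination $A\phi\, D\psi - D\phi\, A\psi$ displayed in the statement. No other subtlety arises: no hypothesis on $\phi$ or $\psi$ at the boundary is needed, and the identity holds for arbitrary cell-centered grid functions in $\mathcal{C}_{\overline{m}\times\overline{n}}$.
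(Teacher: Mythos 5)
Your derivation is correct: applying Proposition~\ref{green1stthm-2d} twice with the roles of $\phi$ and $\psi$ interchanged, cancelling the symmetric bulk term $h^2\,\eipew{D_x\phi}{D_x\psi}+h^2\,\eipns{D_y\phi}{D_y\psi}$, and collecting the boundary sums via the symmetry of the one-dimensional inner products yields exactly the stated identity. The paper itself states this proposition without proof (deferring to the finite-difference framework of its references), and your argument is the standard and natural one, so there is nothing to fault.
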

All of the boundary sums above vanish when the boundary conditions are periodic or homogeneous Neumann.

We need the definition of the following space in the analysis of our scheme:
	\begin{equation}
	\label{mean-zero-space}
H := \left\{\phi\in{\mathcal C}_{m\times n} | \ciptwo{\phi}{{\bf 1}}=0 \right\}.
	\end{equation}
We  equip this space with the bilinear form
	\begin{equation}
\ciptwo{\phi_1}{\phi_2}_{-1} := \eipew{D_x \psi_1}{D_x \psi_2}  + \eipns{D_y \psi_1}{D_y \psi_2} ,
	\end{equation}
for any $\phi_1,\, \phi_2\in H$, where $\psi_i\in{\mathcal C}_{\overline{m}\times\overline{n}}$ is the unique solution to 
	\begin{equation}
-\Dh \psi_i  = \phi_i,\quad \psi_i\mbox{ periodic},\quad \ciptwo{\psi_i}{{\bf 1}} = 0.
	\end{equation}
The proof of the following can be found in~\cite{wang11}.
	\begin{prop}
	\label{h-l-inner-product}
$\ciptwo{\phi_1}{\phi_2}_{-1}$ is an inner product on the space $H$.  Moreover,
	\begin{equation}
\ciptwo{\phi_1}{\phi_2}_{-1} = -\ciptwo{\phi_1}{\Dh^{-1} \phi_2 } = -\ciptwo{\Dh^{-1} \phi_1 }{\phi_2}.
	\end{equation}
Thus
	\begin{equation}
\nrm{\phi}_{-1} :=\sqrt{h^2\,\ciptwo{\phi}{\phi}_{-1}}
	\label{H-norm}
	\end{equation}
defines a norm on $H$.
	\end{prop}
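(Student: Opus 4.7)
The plan is to establish, in order, four things: (i) the auxiliary functions $\psi_1,\psi_2$ in the definition of the form are well-defined; (ii) the two integration-by-parts identities stated in the proposition; (iii) symmetry, bilinearity, and positive-definiteness of $\ciptwo{\,\cdot\,}{\,\cdot\,}_{-1}$ on $H$; and (iv) the norm property. The key tool is Prop.~\ref{green1stthm-2d} (discrete Green's first identity), together with the fact that all boundary sums there vanish under periodic boundary conditions, as noted immediately below Prop.~\ref{green2ndthm-2d}.

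First I would verify that for each $\phi_i\in H$ the discrete Poisson problem $-\Dh\psi_i=\phi_i$ with $\psi_i$ periodic and $\ciptwo{\psi_i}{{\bf 1}}=0$ has a unique solution. The discrete Laplacian on periodic cell-centered functions is a symmetric negative semi-definite linear map whose kernel consists exactly of the constants; hence its range is the orthogonal complement of the constants, i.e., precisely $H$. This gives existence, and imposing $\ciptwo{\psi_i}{{\bf 1}}=0$ selects the unique mean-zero representative. Thus the map $\phi\mapsto\psi=-\Dh^{-1}\phi$ is a well-defined linear bijection from $H$ onto itself.

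Next I would prove the identity $\ciptwo{\phi_1}{\phi_2}_{-1}=-\ciptwo{\Dh^{-1}\phi_1}{\phi_2}$. Apply Prop.~\ref{green1stthm-2d} to the periodic functions $\psi_1,\psi_2$: all boundary single sums cancel, leaving
\begin{equation*}
\eipew{D_x\psi_1}{D_x\psi_2}+\eipns{D_y\psi_1}{D_y\psi_2}=-\ciptwo{\psi_1}{\Dh\psi_2}=\ciptwo{\psi_1}{\phi_2}=-\ciptwo{\Dh^{-1}\phi_1}{\phi_2}.
\end{equation*}
The second equality $-\ciptwo{\Dh^{-1}\phi_1}{\phi_2}=-\ciptwo{\phi_1}{\Dh^{-1}\phi_2}$ follows either by repeating the argument with the roles of $\psi_1$ and $\psi_2$ swapped (thereby also establishing symmetry of the form), or equivalently by applying Prop.~\ref{green2ndthm-2d} to $\psi_1,\psi_2$, whose boundary terms again vanish by periodicity; this yields self-adjointness of $-\Dh^{-1}$ on $H$. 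Bilinearity is inherited from bilinearity of the edge inner products and linearity of $\phi\mapsto\psi$.

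Finally, for positive-definiteness, observe from the definition that
\begin{equation*}
\ciptwo{\phi}{\phi}_{-1}=\eipew{D_x\psi}{D_x\psi}+\eipns{D_y\psi}{D_y\psi}\ge 0,
\end{equation*}
with equality only if $D_x\psi\equiv 0$ and $D_y\psi\equiv 0$. Under periodic boundary conditions this forces $\psi$ to be constant, and the mean-zero normalization $\ciptwo{\psi}{{\bf 1}}=0$ then gives $\psi\equiv 0$, whence $\phi=-\Dh\psi\equiv 0$. Thus $\ciptwo{\,\cdot\,}{\,\cdot\,}_{-1}$ is an inner product on $H$, and $\nrm{\phi}_{-1}=\sqrt{h^2\,\ciptwo{\phi}{\phi}_{-1}}$ is the associated norm. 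The main obstacle is the positive-definiteness step, specifically tying $D_x\psi=D_y\psi=0$ to $\psi\equiv 0$ in the periodic setting; this is where both the periodic boundary conditions and the mean-zero constraint built into $H$ are used simultaneously.
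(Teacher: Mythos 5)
Your proof is correct. Note that the paper itself does not prove this proposition but defers it to reference [wang11]; your argument supplies exactly the standard proof one would expect there, resting on the discrete Green's first identity (Prop.~\ref{green1stthm-2d}) with vanishing periodic boundary sums, the characterization of the kernel of the periodic discrete Laplacian as the constants (which both justifies the solvability of $-\Dh\psi_i=\phi_i$ on $H$ and closes the positive-definiteness step), and the mean-zero normalization. No gaps.
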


	\section{Nonlinear Multigrid Solvers}
	\label{app-nonlinear-multigrid}

	\renewcommand{\theequation}{B.\arabic{equation}}

In this Appendix we provide the details of the nonlinear multigrid algorithms that we use for time stepping the semi-implicit numerical schemes proposed in previous sections.  We note that the first-order convex splitting scheme was first presented in \cite{wang11}, but no implementation strategy or simulations were presented.

	\subsection{Multigrid Method for the First-Order Convex Splitting Scheme}
	\label{subapp-multigrid-first}
	
For the first-order convex splitting scheme (\ref{scheme1_3}) -- (\ref{discrete_1_b}), given $\phi^k,\, \psi^k \in \mathcal{C}_{\overline{m} \times \overline{n}}$, we need to find the grid functions  $\phi,\, \psi,\, \mu,\, \kappa \in \mathcal{C}_{\overline{m} \times \overline{n}}$ satisfying
	\begin{eqnarray}
\phi_{ij} - \phi^k_{ij} &=& \left(  \frac{sM}{\beta +\frac{1}{s}} \right) \Delta_h \mu_{ij} + \frac{1}{\beta+\frac{1}{s}} \psi^{k}_{ij},
	\label{NA1_1}
	\\
\mu_{ij} &=& \phi_{ij}^3 + \alpha\phi_{ij} + 2 \Delta_h \phi^k_{ij} + \Delta_h \kappa_{ij},
	\label{NA1_2}
	\\
\kappa_{ij} &=& \Delta_h \phi_{ij},
	\label{NA1_3}
	\\
\psi_{ij} &=& \frac{\phi_{ij}- \phi^{k}_{ij}}{s},
	\label{NA1_4}
	\end{eqnarray}
where we have dropped the superscript $k+1$.  We solve this system by observing that the first three equations decouple from the fourth. Thus, we solve for $\phi,\,  \mu,\, \kappa \in \mathcal{C}_{\overline{m} \times \overline{n}}$ using (\ref{NA1_1}) -- (\ref{NA1_3}) and then update $\psi \in \mathcal{C}_{\overline{m} \times \overline{n}}$ using (\ref{NA1_4}).

Equations  (\ref{NA1_1}) -- (\ref{NA1_3}) are nonlinear and can be solved using a nonlinear multigrid algorithm similar to the one proposed in~\cite{hu09}.  See also~\cite{baskaran10,kim03,wise10}.  The first task it to identify a nonlinear operator $\mathbf{N}$ and a source term $\mathbf{S}$, such that $\mathbf{N}=\mathbf{S}$ is equivalent to (\ref{NA1_1}) -- (\ref{NA1_3}).  Let $\mathbf{u} =  ( \phi,\mu,\kappa )^T$.  The $3 \times m \times n$ nonlinear operator $\mathbf{N}({\bf u}) = (N^{(1)}({\bf u}),N^{(2)}({\bf u}),N^{(3)}({\bf u}))^T$ is defined via
	\begin{eqnarray}
N^{(1)}_{ij} ({\bf u}) &:=& \phi_{ij} - \left(  \frac{sM}{\beta+\frac{1}{s}} \right) \Delta_h \mu_{ij} ,
	\\
N^{(2)}_{ij} ({\bf u}) &:=& \mu_{ij} - \phi_{ij}^3 - \alpha\phi_{ij} - \Delta_h \kappa_{ij} ,
	\\
N^{(3)}_{ij} ({\bf u}) &:=& \kappa_{ij} - \Delta_h \phi_{ij} .
	\end{eqnarray}
The $3 \times m \times n$ source $\mathbf{S} = \left(S^{(1)},S^{(2)},S^{(3)}\right)^T$ is defined via
	\begin{equation}
S^{(1)}_{ij}:=\phi^k_{ij} +\frac{1}{\beta+\frac{1}{s}} \psi^{k}_{ij}, \quad S^{(2)}_{ij} := 2\Delta_h \phi^k_{ij}, \quad S^{(3)}_{ij} :=0 .
	\end{equation}
Clearly the system (\ref{NA1_1}) -- (\ref{NA1_3}) is equivalent to $\mathbf{N}({\bf u}) = \mathbf{S}$.

The system $\mathbf{N}({\bf u}) = \mathbf{S}$ can be efficiently solved using a nonlinear FAS multigrid method. Extensive literature is available and we refer the reader to \cite{trottenberg01} for a detailed review.  Since we are using a standard FAS V-cycle approach, as in~\cite{hu09,kim03,wise10} we only provide the details concerning how the nonlinear smoothing operation is performed.  The other components of the algorithm can be constructed by the interested reader by consulting~\cite{trottenberg01}.

For the smoothing operator we use a nonlinear red-black Gauss-Siedel method.  Let $\ell$ denote the smoothing iteration.  By $\ell_{\max}$ we mean the maximum number of smoothing iterations.   We point out that the nonlinearity in the operator $\mathbf{N}$ comes only from the cubic term $\phi_{ij}^3$.  This can be handled with a local linearization.  We commonly use either a local Newton-type linearization or a simpler local Picard-type linearization, respectively,
	\begin{equation}
\left(\phi^{\ell+1}_{ij}\right)^3 \approx \left(\phi^{\ell}_{ij}\right)^3 + 3 \left(\phi^{\ell}_{ij}\right)^2 \left(\phi^{\ell+1}_{ij}-\phi^{\ell}_{ij}\right) \quad \mbox{or}\quad \left(\phi^{\ell+1}_{ij}\right)^3 \approx  \left(\phi^{\ell}_{ij}\right)^2 \phi^{\ell+1}_{ij}.
	\end{equation}
To simplify the description of the smoother, we write the details assuming a lexicographic ordering is followed and we employ the Picard linearization.  To get a proper description of the implementation of the red-black ordered Gauss-Seidel method, which is what we use in our codes, we refer the reader to~\cite{trottenberg01}.  The smoothing scheme can thus be written as
	\begin{eqnarray}
\phi_{i,j}^{\ell+1} + \frac{4sM}{h^2\left(\beta+\frac{1}{s}\right)} \mu^{\ell+1}_{i,j} &=&  S^{(1)}_{i,j} + \frac{sM}{h^2\left(\beta+\frac{1}{s}\right)} \left( \mu_{i,j+1}^{\ell} +\mu_{i,j-1}^{\ell+1} +\mu_{i-1,j}^{\ell+1}+\mu_{i+1,j}^{\ell} \right),
	\nonumber
	\\
&&
	\\
 -\left(\alpha + \left(\phi^{\ell}_{i,j}\right)^2 \right) \phi^{\ell+1}_{i,j} +\mu_{i,j}^{\ell+1} + \frac{4}{h^2} \kappa^{\ell+1}_{i,j} &=& S^{(2)}_{i,j} + \frac{1}{h^2}\left( \kappa_{i,j+1}^{\ell} +\kappa_{i,j-1}^{\ell+1} +\kappa_{i-1,j}^{\ell+1}+\kappa_{i+1,j}^{\ell} \right),
	\\
\frac{4}{h^2}\phi^{\ell+1}_{i,j} + \kappa_{i,j}^{\ell+1} &=& S^{(3)}_{i,j} + \frac{1}{h^2}\left( \phi_{i,j+1}^{\ell} +\phi_{i,j-1}^{\ell+1} +\phi_{i-1,j}^{\ell+1}+\phi_{i+1,j}^{\ell} \right) .
	\end{eqnarray}
At each grid cell $(i,j)$, we solve for the unknowns $\phi^{\ell+1}_{i,j}$, $\mu^{\ell+1}_{i,j}$, $\kappa^{\ell+1}_{i,j}$ using Cramer's rule.  One complete smoothing pass is effected when all of the grid points $(i,j)$ have been visited exactly once.  The smoothing operation is complete when $\ell_{\max}$ full smoothing passes (or smoothing iterations) have been conducted.

	\subsection{Multigrid Method for the Second-Order Convex Splitting Scheme}
	\label{subapp-multigrid-second}

In the case of the fully second order scheme (\ref{mpfc_scheme2_3}) -- (\ref{discrete_2_b}), given $\phi^k,\psi^k,\phi^{k-1} \in \mathcal{C}_{\overline{m} \times \overline{n}}$ we need to find the periodic grid functions $\phi,\psi, \mu, \kappa \in  \mathcal{C}_{\overline{m} \times \overline{n}}$ satisfying
	\begin{eqnarray}
\phi_{i,j} -\phi^k_{i,j} &=&  \frac{s M}{ \beta + \frac{2}{s}} \Delta_h \mu_{i,j} + \frac{2}{  \beta+ \frac{2}{s}  }\psi^k_{i,j},
 	\label{NA2_1}
	\\ 
\mu_{i,j} &=& \frac{1}{2} \chi\left(\phi_{i,j},\phi^k_{i,j} \right) \left( \phi_{i,j}+\phi^k_{i,j} \right) + \frac{\alpha}{2}\left(\phi_{i,j} + \phi^k_{i,j} \right) + 2\Delta_h \tilde{\phi}_{i,j}^{k+\hf} + \Delta_h \kappa_{i,j},
	\label{NA2_2}
	\\ 
\kappa_{i,j} &=& \frac{1}{2} \left( \Delta_h \phi_{i,j} + \Delta_h \phi^{k}_{i,j} \right),
	\label{NA2_3}
	\\ 
\psi_{i,j} &=& \frac{2}{s}\left(\phi_{i,j} -\phi^k_{i,j}\right) -  \psi^k_{i,j},
	\label{NA2_4}
	\end{eqnarray}
where we have dropped the superscripts on the unknowns for the sake of brevity.  As before, we solve for $\phi,\,  \mu,\, \kappa \in \mathcal{C}_{\overline{m} \times \overline{n}}$ using (\ref{NA2_1}) -- (\ref{NA2_3}) first and then update $\psi \in \mathcal{C}_{\overline{m} \times \overline{n}}$ using (\ref{NA2_4}).	
	
With the same notation as before, the nonlinear operator $\mathbf{N}({\bf u}) = (N^{(1)}({\bf u}),N^{(2)}({\bf u}),N^{(3)}({\bf u}))^T$ is defined component-wise via
\begin{eqnarray}
N^{(1)}_{ij} ({\bf u}) &:=& \phi_{ij} - \frac{M s}{\beta+\frac{2}{s}} \Delta_h\mu_{ij},
	\\
N^{(2)}_{ij} ({\bf u}) &:=& \mu_{ij} - \frac{1}{2} \chi\left(\phi_{i,j},\phi^k_{i,j} \right) \left( \phi_{i,j}+\phi^k_{i,j} \right) - \frac{\alpha}{2} \phi_{i,j} - \Delta_h \kappa_{i,j} ,
	\\
N^{(3)}_{ij} ({\bf u}) &:=& \kappa_{ij} - \frac{1}{2} \Delta_h \phi_{ij} ,
	\end{eqnarray}
and the source term $\mathbf{S}$ is defined component-wise via
	\begin{equation}
S^{(1)}_{ij}:=\phi^k_{ij} +\frac{2}{\beta+\frac{2}{s}} \psi^{k}_{ij} , \quad S^{(2)}_{ij} := \frac{\alpha}{2}\phi^{k}_{i,j} + 2\Delta_h \tilde{\phi}^{k+\hf}_{ij} , \quad S^{(3)}_{ij} := \frac{1}{2} \Delta_h \phi^k_{i,j} .
	\end{equation}

The nonlinear Gauss-Seidel smoothing scheme is as follows:
	\begin{eqnarray}
&&\phi_{i,j}^{\ell+1} + \frac{4sM}{h^2\left(\beta+\frac{2}{s}\right)} \mu^{\ell+1}_{i,j} 
	\nonumber
	\\
&& \hspace{0.35in} = S^{(1)}_{i,j} + \frac{4sM}{h^2\left(\beta+\frac{2}{s}\right)} \left( \mu_{i,j+1}^{\ell} +\mu_{i,j-1}^{\ell+1} +\mu_{i-1,j}^{\ell+1}+\mu_{i+1,j}^{\ell} \right) ,
	\\
&&-\left( \frac{\alpha}{2} + \frac{1}{2}\chi\left(\phi^{\ell}_{i,j}, \phi^{k}_{i,j}\right)\right) \phi^{\ell+1}_{i,j} +\mu_{i,j}^{\ell+1} + \frac{4}{h^2} \kappa^{\ell+1}_{i,j} 
	\nonumber
	\\
&& \hspace{0.35in} = S^{(2)}_{i,j} + \frac{1}{2}\chi\left(\phi^{\ell}_{i,j},  \phi^{k}_{i,j}\right) \phi^{k}_{i,j} + \frac{1}{h^2}\left( \kappa_{i,j+1}^{\ell} +\kappa_{i,j-1}^{\ell+1} +\kappa_{i-1,j}^{\ell+1}+\kappa_{i+1,j}^{\ell} \right) ,
	\\
&&\frac{2}{h^2}\phi^{\ell+1}_{i,j} + \kappa_{i,j}^{k+1,\ell+1} 
	\nonumber
	\\
&& \hspace{0.35in} = S^{(3)}_{i,j}+ \frac{1}{2h^2}\left( \phi_{i,j+1}^{\ell} +\phi_{i,j-1}^{\ell+1} +\phi_{i-1,j}^{\ell+1}+\phi_{i+1,j}^{\ell} \right) .
\end{eqnarray}
At each cell $(i,j)$, we solve for the unknowns $\phi^{\ell+1}_{i,j}$, $\mu^{\ell+1}_{i,j}$, $\kappa^{\ell+1}_{i,j}$ using Cramer's rule. One complete smoothing pass is effected when all of the grid points $(i,j)$ have been visited exactly once.  The smoothing operation is complete when $\ell_{\max}$ full smoothing passes (or smoothing iterations) have been conducted.
	
	\clearpage
	\newpage

	\section{Tables}

	\begin{table}[ht]
	\begin{center}
	\begin{tabular}{|c|c|c|c|}
	\hline
$h_c$ & $h_f$ & $\nrm{\phi_{h_f}-\phi_{h_c}}_2$ & Rate
	\\
	\hline
$\frac{32}{32}$ & $\frac{32}{64}$  & $2.4306\times 10^{-4}$ & ---
	\\
	\hline
$\frac{32}{64}$ & $\frac{32}{128}$  & $6.6989 \times 10^{-5}$ & 1.8594
	\\
	\hline
$\frac{32}{128}$ & $\frac{32}{256}$  & $1.7059 \times 10^{-5}$ & 1.9734
	\\
	\hline
$\frac{32}{256}$ & $\frac{32}{512}$  & $4.2828  \times 10^{-6}$ & 1.9939 
	\\
	\hline
	\end{tabular}
\caption{Error (successive differences) and calculated convergence rates for the first-order convex splitting scheme (\ref{scheme1_3}) -- (\ref{discrete_1_b}).  The parameters are taken to be $\epsilon =0.025$ ($\alpha = 1 - \epsilon$) and $\beta =0.9$, and $t_f = 10$. The time step sizes are determined by the quadratic refinement path $s=0.025 h^2$.  The global error is predicted to be $\mathcal{O}(s)+\mathcal{O}(h^2) =\mathcal{O}(h^2)$, which is confirmed in the test.}
	\label{tab1}
	\end{center}
	\end{table}

		\begin{table}[ht]
	\begin{center}
	\begin{tabular}{|c|c|c|c|}
	\hline
$h_c$ & $h_f$ & $\nrm{\phi_{h_f}-\phi_{h_c}}_2$ & Rate
	\\
	\hline
$\frac{32}{32}$ & $\frac{32}{64}$  & $2.5301 \times 10^{-4}$ & --
	\\
	\hline
$\frac{32}{64}$ & $\frac{32}{128}$  & $6.6800 \times 10^{-5}$ & 1.9213
	\\
	\hline
$\frac{32}{128}$ & $\frac{32}{256}$  & $1.6818 \times 10^{-5}$ & 1.9899
	\\
	\hline
$\frac{32}{256}$ & $\frac{32}{512}$  & $4.2099 \times 10^{-6}$ & 1.9981
	\\
	\hline
	\end{tabular}
\caption{Errors (successive differences) and calculated convergence rates for the second-order convex splitting scheme (\ref{mpfc_scheme2_3}) -- (\ref{discrete_2_b}).  The parameters are taken to be $\epsilon =0.025$ ($\alpha = 1 - \epsilon$), $\beta =0.9$, and $t_f = 10$.  The time steps are determined by the linear refinement path $s=0.05 h$.  The global error is predicted to be $\mathcal{O}(s^2)+\mathcal{O}(h^2) =\mathcal{O}(h^2)$, which is confirmed in the test.}
	\label{tab2}
	\end{center}
	\end{table}

	\clearpage
	\newpage

	\section{Figures}

	\begin{figure}[ht]
	\begin{center}
	\includegraphics[width=5.5in]{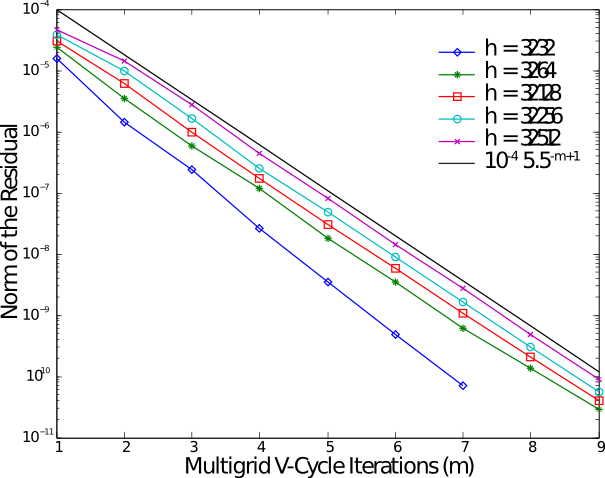}
	\end{center}
\caption{Multigrid convergence test for the second-order convex splitting scheme (\ref{mpfc_scheme2_3}) -- (\ref{discrete_2_b}).  The initial conditions are given in (\ref{initial-data}); the time step size is held fixed at $s = 10$, while $h$ is allowed to vary; and the other parameters are $\epsilon = 0.025$, $M=1$, $\beta = 0.9$, and $\Omega = (0,32)^2$. We use $\ell_{\max}=1$ in the definition of the smoothing operator (cf. App.~\ref{subapp-multigrid-second}), and we conduct exactly 20 time steps, so that the final time $t_f = 200$. The residuals above are reported only for the last time step.  We observe that, independent of the space step size $h$, the norm of the residual is always decreasing by about a factor of $5.5$ for each multigrid v-cycle iteration.  This demonstrates that our multigrid scheme is of nearly optimal order.}
	\label{fig1}	   
	\end{figure}

	\begin{figure}[ht]
	\begin{center}
	\includegraphics[width=6.5in]{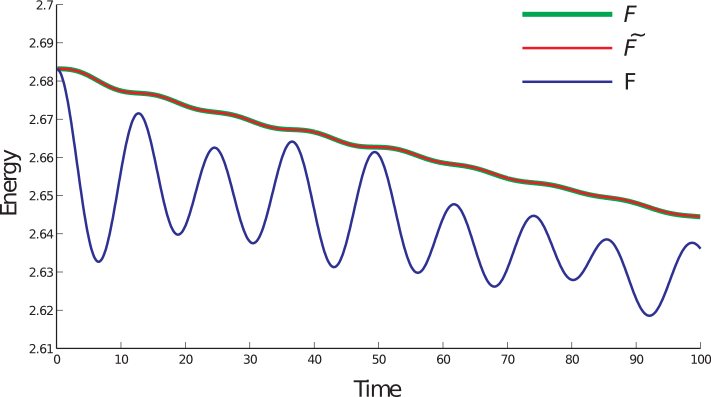}
	\end{center}
\caption{(Color online.) The computed energies $\mathcal{F}$ (\ref{discrete_energy_MPFC}; green), $\widetilde{\mathcal{F}}$ (\ref{energy_numer}; red), and $F$ (\ref{discrete_energy_PFC}; blue) plotted as functions of time.   The computation is effected with our second-order convex-splitting scheme (\ref{mpfc_scheme2_3}) -- (\ref{discrete_2_b}), where the initial data are given in (\ref{initial-data}), and $M=1$, $\epsilon =0.025$ ($\alpha = 1 - \epsilon$), $\Omega = (0,32)\times(0,32)$, $\beta = 0.01$, $h = \frac{32}{128}$, and $s = 0.1$.   As pointed out earlier, the energy $F$ is not necessarily non-decreasing in time.  The only energy that is guaranteed to be non-increasing is $\widetilde{\mathcal{F}}$; this property is observed here.  But, note that $\mathcal{F}$ is nearly identical to $\widetilde{\mathcal{F}}$, which is typically the case.}
	\label{fig2}	   
	\end{figure}

	\begin{figure}
	\begin{center}
	\includegraphics[width=6.5in]{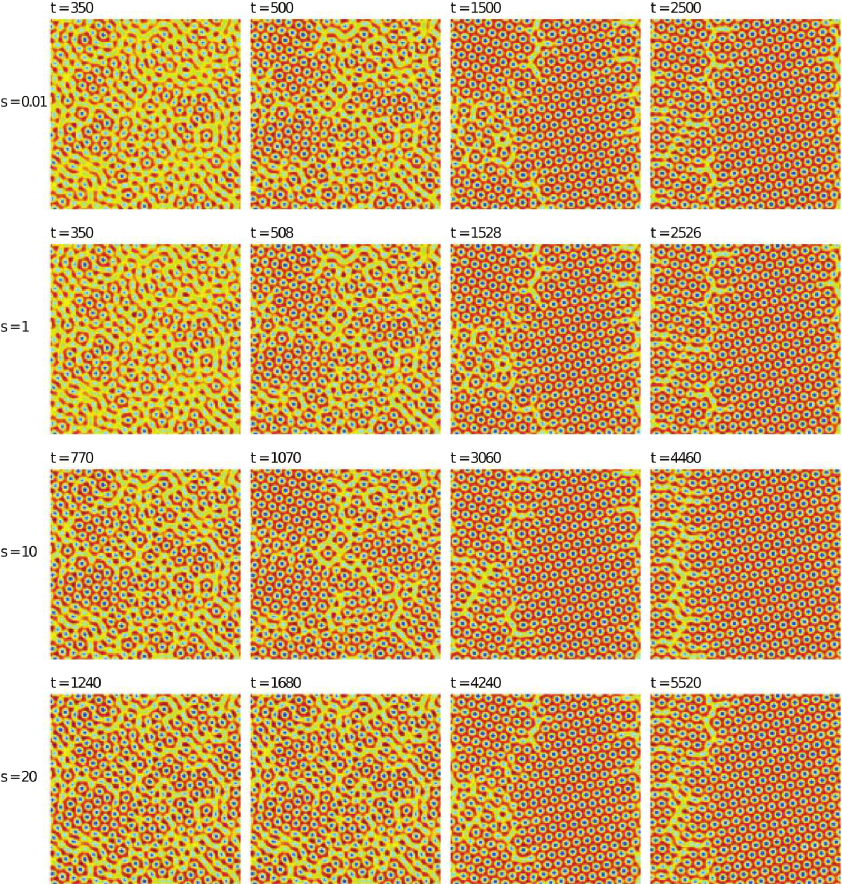}
	\end{center}
\caption{The evolution of the density field $\phi$ calculated using the second-order convex-splitting scheme and four different time steps: $s =0.01$, 1, 10, 20. The parameters are $\epsilon = 0.025$ ($\alpha = 1-\epsilon$), $\beta =0.9$, and $h=1$. The plots in the same column have the same pseudo energy $\mathcal{F}$.}
	\label{fig3}
	\end{figure}

	\begin{figure}
	\begin{center}
\includegraphics[width=4in]{./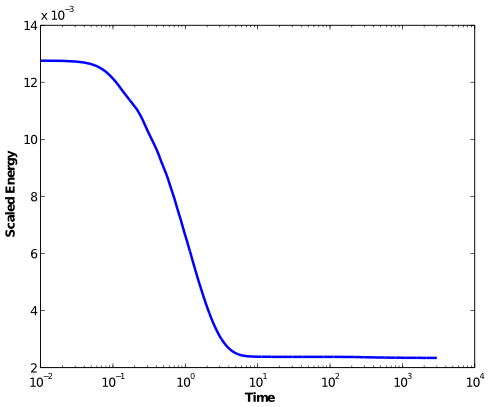}
	\end{center}
\caption{The scaled MPFC pseudo energy $\frac{\mathcal{F}}{L_xL_y}$ as a function of time for the evolution from Fig.~\ref{fig3} using the fully second order scheme with $s=0.01$. Snapshots of the density field $\phi$ are shown in Fig.~\ref{fig3}, row 1.  The numerical and physical parameters for the simulation are given in the caption of Fig.~\ref{fig3} and in the text.}
	\label{fig4}
	\end{figure}

	\begin{figure}
	\begin{center}
\includegraphics[width=6.5in]{./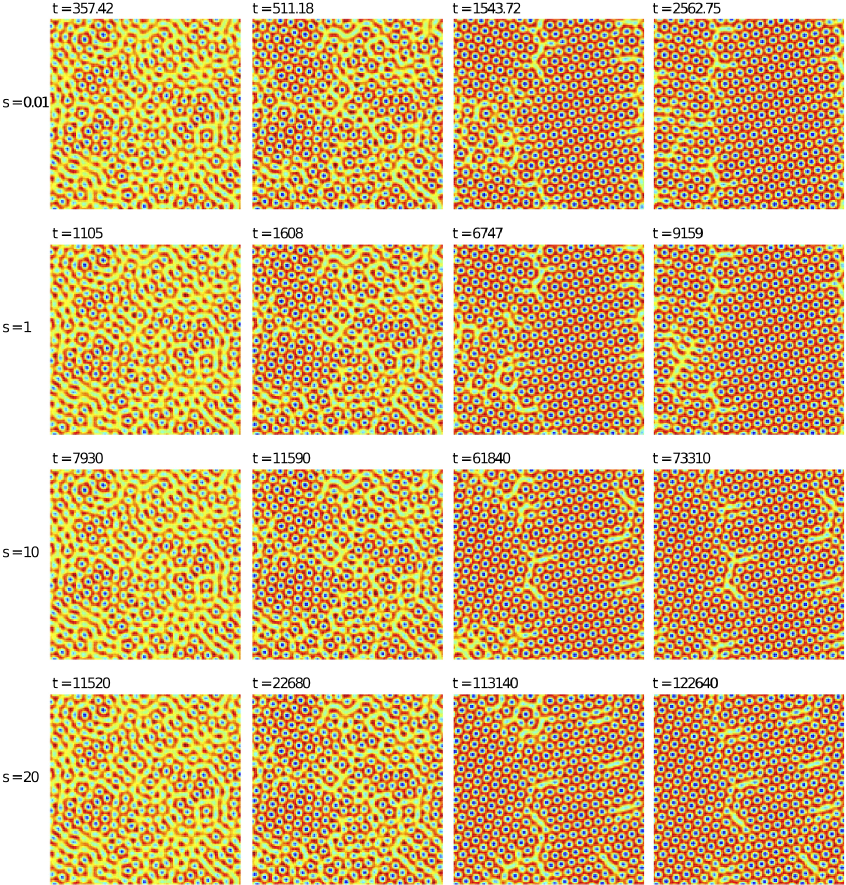}
	\end{center}
\caption{The evolution of the density field $\phi$ calculated using the first-order convex-splitting scheme and four different time steps: $s =0.01$, 1, 10, 20. The parameters are $\epsilon = 0.025$, $\beta =0.9$ and $h=1$. The plots in the same column have the same pseudo energy $\mathcal{F}$. Compare with Fig.~\ref{fig3}.}
	\label{fig5}
	\end{figure}

	\begin{figure}
	\begin{center}
	\includegraphics[width=4in]{./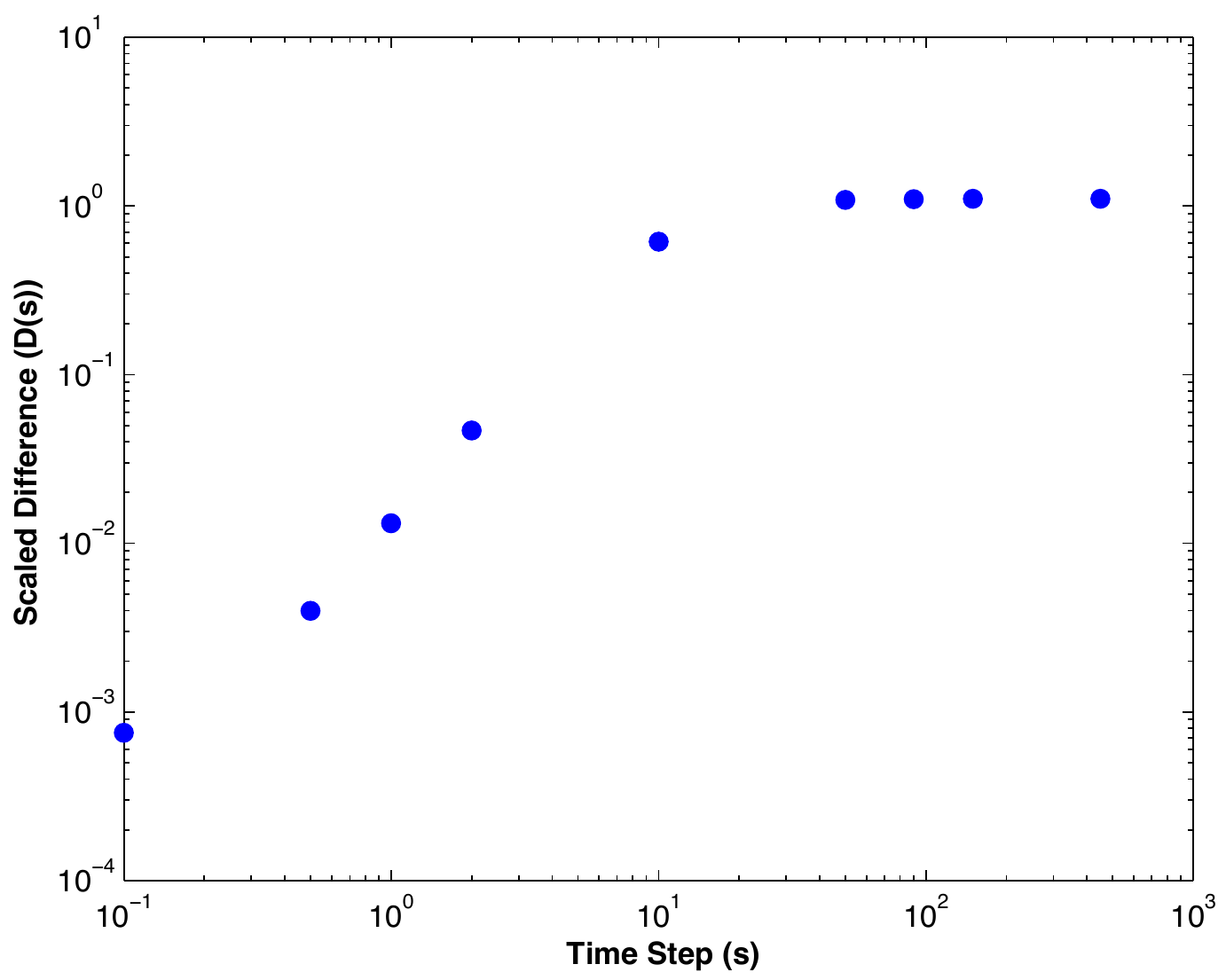}
\caption{Log-log plot of the scaled difference in solutions $D(s)$, defined in (\ref{scaled-difference}), calculated using the second-order convex-splitting scheme.  The parameters are $t_f=350$, $\beta =0.9$, $\epsilon =0.025$ ($\alpha = 1-\epsilon$), and $h = 1$.  The baseline calculation is made using the time step size $s = s_1 = 0.01$; and the corresponding baseline evolution of $\phi$ is shown in Fig.~\ref{fig3}, row 1.}
	\label{fig6}
	\end{center}
	\end{figure}

	\begin{subfigures}

	\begin{figure}
	\begin{center}
	\includegraphics[width=5in]{./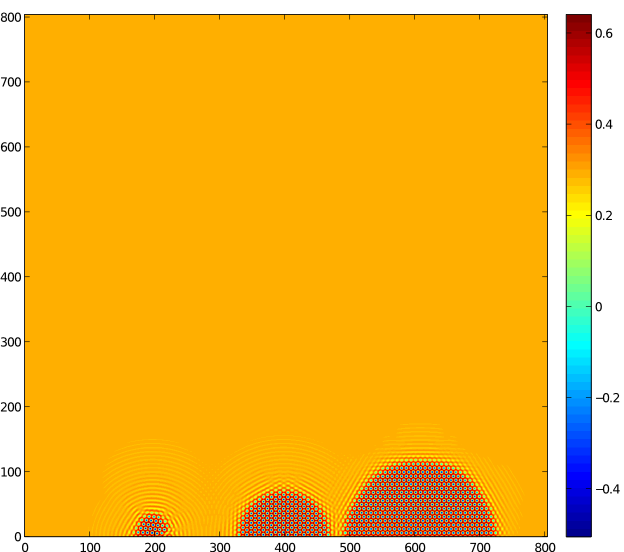}
\caption{Growth of a polycrystal via freezing of super cooled liquid. The figure shows the density field $\phi$ at $t=500$.  The physical  parameters are $M=1$, $\epsilon = 0.25$ ($\alpha = 1-\epsilon$),  $\beta =0.9$, $s=1$ with $h=\frac{804}{2048}$.}
	\label{fig7a}
	\end{center}
	\end{figure}

	\begin{figure}
	\begin{center}
	\includegraphics[width=5in]{./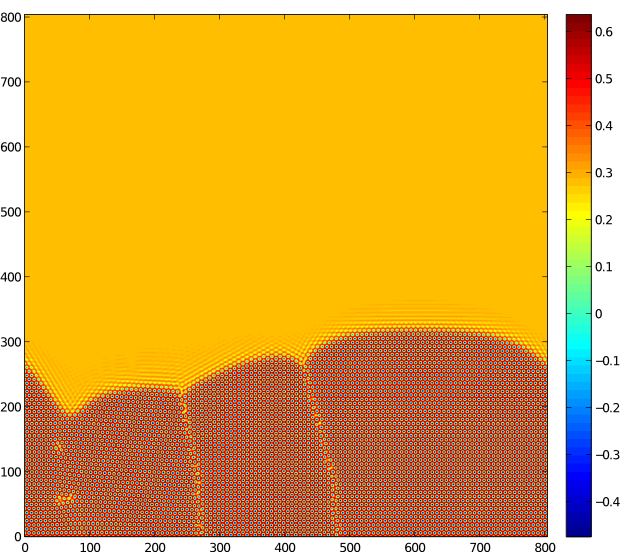}
\caption{Growth of a polycrystal via freezing of super cooled liquid. The figure shows the density field $\phi$ at $t=1000$.  The physical  parameters are $M=1$, $\epsilon = 0.25$ ($\alpha = 1-\epsilon$),  $\beta =0.9$, $s=1$ with $h=\frac{804}{2048}$.}
	\label{fig7b}
	\end{center}
	\end{figure}

	\begin{figure}
	\begin{center}
	\includegraphics[width=5in]{./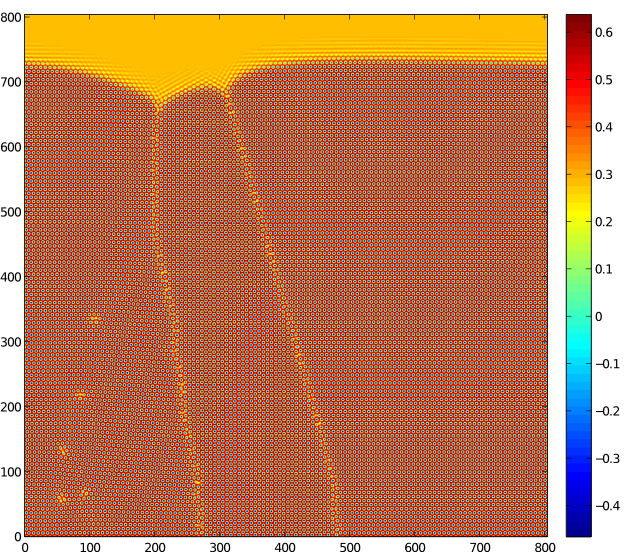}
\caption{Growth of a polycrystal via freezing of super cooled liquid. The figure shows the density field $\phi$ at $t=2000$.  The physical  parameters are $M=1$, $\epsilon = 0.25$ ($\alpha = 1-\epsilon$),  $\beta =0.9$, $s=1$ with $h=\frac{804}{2048}$.}
	\label{fig7c}
	\end{center}
	\end{figure}

	\begin{figure}
	\begin{center}
	\includegraphics[width=5in]{./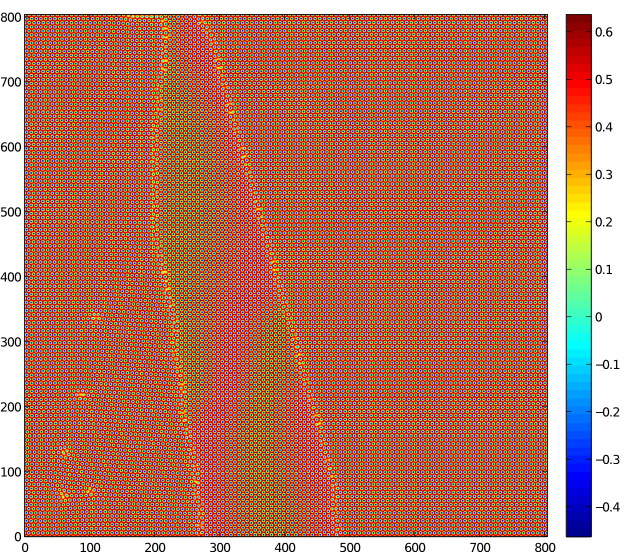}
\caption{Growth of a polycrystal via freezing of super cooled liquid. The figure shows the density field $\phi$ at $t=3000$.  The physical  parameters are $M=1$, $\epsilon = 0.25$ ($\alpha = 1-\epsilon$),  $\beta =0.9$, $s=1$ with $h=\frac{804}{2048}$.}
	\label{fig7d}
	\end{center}
	\end{figure}

	\end{subfigures}

	\begin{figure}
	\begin{center}
	\includegraphics[width=5in]{./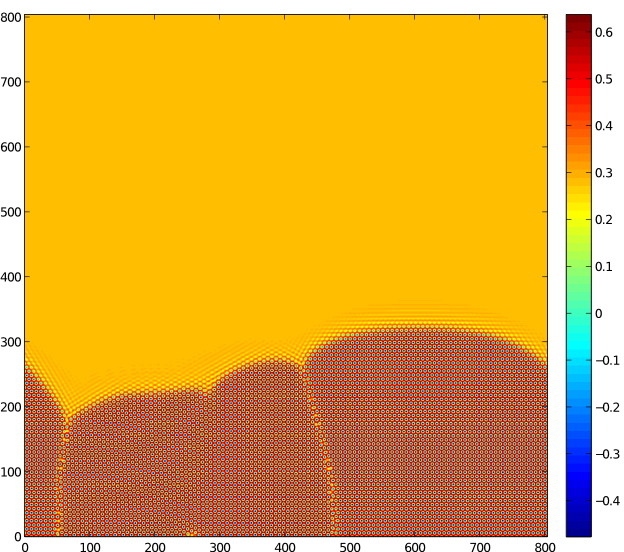}
\caption{Growth of a polycrystal via freezing of super cooled liquid. The figure shows the density field $\phi$ at $t=3000$.  The physical  parameters are $M=1$, $\epsilon = 0.25$ ($\alpha = 1-\epsilon$),  $\beta =10$, $s=1$ with $h=h=\frac{804}{2048}$.  Note the larger value of $\beta$ used here compared with the value used in the simulation represented in Figs.~\ref{fig7a} -- \ref{fig7d}.  Note the near-absence of a grain boundary between the first and second grains (about $x = 300$), compared with the microstructure shown in Fig.~\ref{fig7b}.}
	\label{fig8}
	\end{center}
	\end{figure}

	\begin{figure}
	\begin{center}
	\includegraphics[width=4in]{./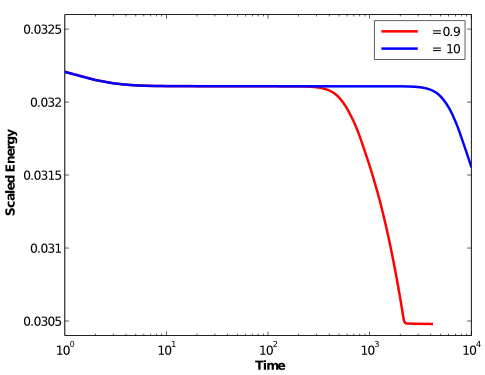}
\caption{(Color online.) Scaled pseudo energy $\frac{\mathcal{F}}{L_x L_y}$ as a function of time for the polycrystal simulations depicted in Figs.~\ref{fig7a} -- \ref{fig7d} and in Fig.~\ref{fig8}.  The red line corresponds to the parameter $\beta = 0.9$ (Figs.~\ref{fig7a} -- \ref{fig7d}), and the blue line corresponds to $\beta =10$ (Fig.~\ref{fig8}). Here the time axis is on the logarithmic scale.}
	\label{fig9}
	\end{center}
	\end{figure}

	\begin{figure}
	\centering
\includegraphics[width=4in]{./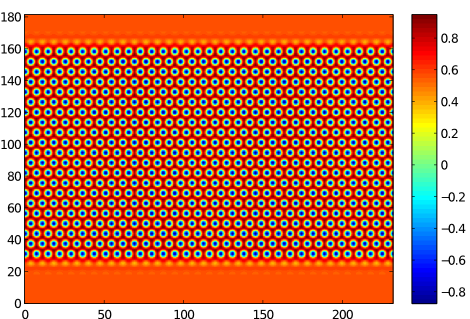}
\caption{An annealed periodic crystal strip, about 20 atom-layers thick, placed in a bath of coexistence liquid.  For the next test,  the results of which are shown in Fig.~\ref{fig11}, we simultaneously apply a horizontal shear strain to the bottom row of atoms and pin the top row of atoms.}
	\label{fig10}
	\end{figure}

	\begin{figure}
	\centering
\includegraphics[width=5.0in]{./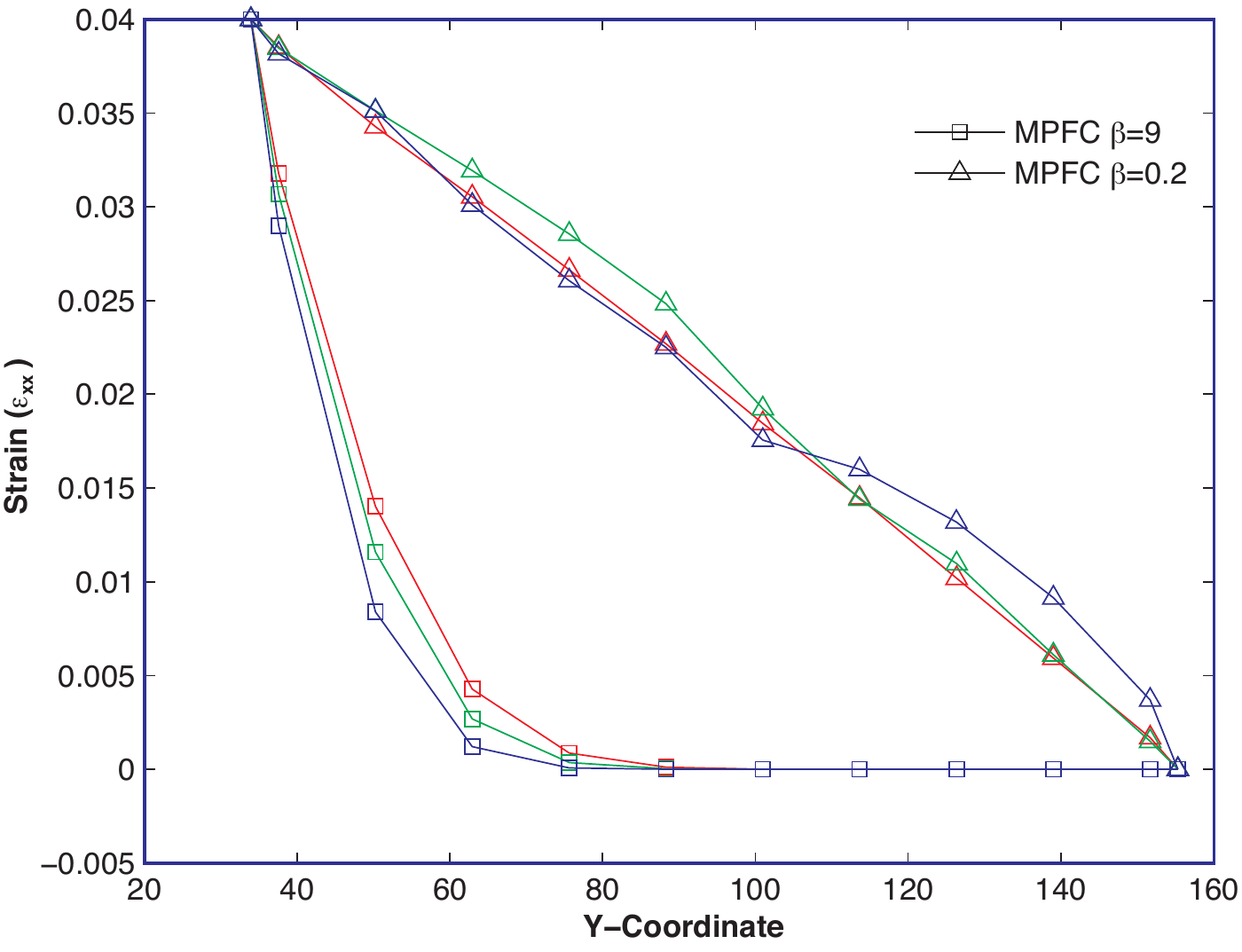}
\caption{(Color online.) Strain ($\varepsilon_{xx}$) fields as functions of the crystal thickness ($y$-coordinate) at times $t=10$ (blue), $t=14$ (green) and, $t=18$ (red).  A horizontal shear strain of 4\% has been applied to the bottom most row of the crystal at $t=0$.  Simulation results are shown for two cases: $\beta =0.2$ (triangles, low damping case) and $\beta = 9$ (squares, high damping case).  Note the oscillatory nature of the low damping MPFC case (triangles). In this model the crystal rapidly relaxes to the strained equilibrium state.  The high damping MPFC case behaves much more like the PFC model, as expected, with a slow relaxation to equilibrium and no oscillatory overshoot at long times.}
	\label{fig11}
	\end{figure}

	\clearpage
	\newpage

	\bibliographystyle{plain}
	\bibliography{mpfcjcp}

	\end{document}